\begin{document}

\title{Summation formulas of hyperharmonic numbers with their generalizations II}
\author{
Takao Komatsu
\\
\small Department of Mathematical Sciences, School of Science\\
\small Zhejiang Sci-Tech University\\
\small Hangzhou 310018 China\\
\small \texttt{komatsu@zstu.edu.cn}\\\\
Rusen Li\\
\small School of Mathematics\\
\small Shandong University\\
\small Jinan 250100 China\\
\small \texttt{limanjiashe@163.com}
}

\date{
\small MR Subject Classifications: Primary 05A19, 11B65; Secondary 11B73
}

\maketitle

\def\stf#1#2{\left[#1\atop#2\right]}
\def\sts#1#2{\left\{#1\atop#2\right\}}
\def\e{\mathfrak e}
\def\f{\mathfrak f}

\newtheorem{theorem}{Theorem}
\newtheorem{Prop}{Proposition}
\newtheorem{Cor}{Corollary}
\newtheorem{Lem}{Lemma}
\newtheorem{Example}{Example}
\newtheorem{Remark}{Remark}

\begin{abstract}
In 1990, Spie\ss \, gave some identities of harmonic numbers including the types of $\sum_{\ell=1}^n\ell^k H_\ell$, $\sum_{\ell=1}^n\ell^k H_{n-\ell}$ and $\sum_{\ell=1}^n\ell^k H_\ell H_{n-\ell}$. In this paper, we derive several formulas of hyperharmonic numbers including $\sum_{\ell=0}^{n} {\ell}^{p} h_{\ell}^{(r)}  h_{n-\ell}^{(s)}$ and $\sum_{\ell=0}^n \ell^{p}(h_{\ell}^{(r)})^{2}$. Some more formulas of generalized hyperharmonic numbers are also shown.
\\
{\bf Keywords:} hyperharmonic numbers, Stirling numbers, summation formulae
\end{abstract}

\section{Introduction and preliminaries}

Many researchers have been considering varieties of identities involving harmonic numbers $H_n:=\sum_{j=1}^n 1/j$. One of the most famous types is the so-called Euler sum like
$$
\sum_{k=1}^\infty\frac{(H_k)^m}{(k+1)^n}\quad(m,n\ge 1)\,,
$$
that Euler considered in response to a letter from Goldbach in 1742 (see, e.g., \cite[p.253]{Berndt}). It is interesting that the Riemann zeta functions and their generalizations often appear in such expressions. Borwein-Borwein-Girgensohn theorem \cite{BBG} is useful to compute some kinds of Euler sums.

There are many generalizations of harmonic numbers. One of the most useful ones is the generalized harmonic numbers, defined by
$$
H_n^{(r)}=1+\frac{1}{2^r}+\cdots+\frac{1}{n^r}\,.
$$
Zave \cite{Zave} showed the coefficients of some power series expansions in terms of the generalized harmonic numbers.
One of the most interesting generalized Euler sums is of the form
$$
S_{\pi,q}=\sum_{k=1}^\infty\frac{H_k^{(\pi_1)}H_k^{(\pi_2)}\cdots H_k^{(\pi_r)}}{k^q}
$$
and the quantity $\pi_1+\pi_2+\cdots+\pi_r+q$ is called the weight of the sum.
Note that one part of weights $k^q$ exists in the denominator.

On the contrary, Spie\ss \, \cite{Spiess} gave some identities including the types of $\sum_{\ell=1}^n\ell^k H_\ell$, $\sum_{\ell=1}^n\ell^k H_{n-\ell}$ and $\sum_{\ell=1}^n\ell^k H_\ell H_{n-\ell}$. Here, weights exist in the numerator, that is, they are of the form in the multiplication.

Another kind of generalization of harmonic numbers is the hyperharmonic numbers \cite{BS,BGG,Cereceda,Conway,Dil,Kamano,MS,MD,omur}, defined by
\begin{equation}
h_n^{(r)}=\sum_{\ell=1}^n h_\ell^{(r-1)}\quad\hbox{with}\quad h_n^{(1)}=H_n\,.
\label{def:hyperharmonic}
\end{equation}
In particular, we have
\begin{align}
h_n^{(r)}&=\binom{n+r-1}{r-1}(H_{n+r-1}-H_{r-1})\quad\hbox{\cite{Conway}}
\label{h01}\\
&=\sum_{j=1}^n\binom{n+r-j-1}{r-1}\frac{1}{j}\quad\hbox{\cite{BGG}}\,,
\label{h02}
\end{align}

Let $x$ be an indeterminate. Then following Riordan {\cite{Riordan}}, we write
$$
(x)_{(n)}=x(x-1)\cdots(x-n+1) \quad  (n \in \mathbb N),
$$
for the falling factorial, with $(x)_{(0)}=1$, and $\mathbb N:=\{1,2,3,\cdots\}$.
With this notation, the Stirling numbers of the first kind, denoted by $s(n,k)$, are defined as
$$
(x)_{(n)}=\sum_{k=0}^{n} s(n,k) x^{k} \quad  (n \in \mathbb N).
$$
The Stirling numbers of the second kind, denoted by $S(n,k)$, are defined as
$$
x^{n}=\sum_{k=0}^{n} S(n,k) (x)_{(k)}, \quad  (n \in \mathbb N).
$$
These numbers may also be defined recursively by
\begin{align*}
s(n+1,k)&=s(n,k-1)-n \cdot s(n,k),\\
S(n+1,k)&=S(n,k-1)+k \cdot S(n,k)
\end{align*}
with boundary values
$$
s(n,0)=s(0,n)=S(n,0)=S(0,n)=\delta_{n 0} \quad  (n \geq 0),
$$
where $\delta_{n m}$ is the Kronecker delta, that is, $\delta_{n n}=1$, $\delta_{n m}=0$ for $n \neq m$. Tables of the Stirling numbers can be found in Abramowitz and Stegun {\cite{Abra}}.
The unsigned Stirling numbers of the first kind, denoted by $s_{u}(n,k)$, are defined as
$$
(x)^{(n)}=\sum_{k=0}^{n} s_{u}(n,k) x^{k} \quad  (n \in \mathbb N)\,.
$$
where $(x)^{(n)}=x(x+1)\cdots(x+n-1)$ ($n\ge 1$) denotes the rising factorial with $(x)^{(0)}=1$.
It is well-known that $s_{u}(n,k)=(-1)^{n+k}s(n,k)$.

The generalized hyperharmonic numbers are defined by (see {\cite{Dil,Rusen,omur}})
$$
H_n^{(p,r)}:=\sum_{j=1}^n H_{j}^{(p,r-1)} \quad (n,p,r \in \mathbb N)\,,
$$
with
$$
H_n^{(p,1)}=H_n^{(p)}\,.
$$
Observing $H_n^{(1,r)}=h_n^{(r)}$, we see that the generalized hyperharmonic numbers are unified extensions of both generalized harmonic numbers and hyperharmonic numbers.

According to the spirit of Spie\ss, the authors \cite{Komatsu} derived several formulas of hyperharmonic numbers of type $\sum_{\ell=1}^{n} {\ell}^{p} H_{\ell}^{(r)}$ and $\sum_{\ell=0}^{n} {\ell}^{p} H_{n-\ell}^{(r)}$. Several formulas of $q$-hyperharmonic numbers are also derived as $q$-generalizations.
The purpose of this paper is to show several identities$$
\sum_{\ell=0}^{n} {\ell}^{p} h_{\ell}^{(r)}  h_{n-\ell}^{(s)},\quad
\sum_{\ell=0}^n (\ell)^{(p)}h_{\ell}^{(r)}  h_{n-\ell}^{(s)},\quad
\sum_{\ell=0}^n \ell^{p}(h_{\ell}^{(r)})^{2},\quad
\sum_{\ell=0}^n (\ell)^{(p)}(h_{\ell}^{(r)})^{2}
$$
and
$$
\sum_{\ell=0}^n \ell^{p}H_{\ell}^{(q,r)},\quad
\sum_{\ell=0}^n \ell^{p}H_{n-\ell}^{(r)},\quad
\sum_{\ell=0}^n \ell^{p}H_{n-\ell}^{(q,r)}.
$$

This paper is also motivated from the summation $\sum_{\ell=1}^n\ell^k$, which is related to Bernoulli numbers (e.g., \cite{Carlitz,CFZ}), because we can think of harmonic numbers or hyperharmonic numbers as the weight of the summation $\sum_{\ell=1}^n\ell^k$.

\section{Some fundamental results and lemmata}

\subsection{Some fundamental results}

Spie\ss \, \cite{Spiess} gives some identities including the types of $\sum_{\ell=0}^n\ell^k H_\ell$ and $\sum_{\ell=0}^n\ell^k H_{n-\ell}$ in terms of $H_{n+1}$. More precisely, it is shown {\cite{Spiess}} that for $n, k, r \in \mathbb N$,
\begin{align*}
&\sum_{\ell=0}^{n}{\ell}^{k}=A(k,n), \notag\\
&\sum_{\ell=0}^{n} {\ell}^{k} H_{\ell}=A(k,n)H_{n+1}-B(k,n), \notag\\
&\sum_{\ell=0}^{n} {\ell}^{k} H_{n-\ell}=A(k,n)H_{n+1}-C(k,n), \notag\\
&\sum_{\ell=0}^{n} {\ell}^{k} H_{\ell}H_{n-\ell}=A(k,n)S_{n+1}-(B(k,n)+C(k,n))H_{n+1}+D(k,n), \notag\\
&\sum_{\ell=0}^{n}{\ell}^{k} H_{\ell}^{(2)}=A(k,n)(H_{n})^{2}-(-1)^{p}B_{p}^{+}H_{n}+E(k,n), \notag\\
&\sum_{\ell=0}^{n}{\ell}^{k} (H_{\ell})^{2}=A(k,n)(H_{n})^{2}-F(k,n)H_{n}+G(k,n), \notag\\
&\sum_{\ell=0}^{n} {\ell}^{k} H_{\ell}^{(r)}=A(k,n) H_{n}^{(r)}-\sum_{t=1}^{k+1} H_{n}^{(r-t)} H(k,t)\,,\notag
\end{align*}
where $A(k,n)$, $B(k,n)$, $C(k,n)$, $D(k,n)$, $E(k,n)$, $F(k,n)$ and $G(k,n)$ are polynomials in $n$,
$S_n=(H_n)^2-H_n^{(2)}$, $B_n^{+}$ is the well-known Bernoulli numbers.
The Bernoulli numbers $B_n^{+}$ are determined by the recurrence formula
$$
\sum_{j=0}^k\binom{k+1}{j}B_j^{+}=k+1\quad (k\ge 0)
$$
or by the generating function
\begin{align*}
\frac{t}{1-e^{-t}}=\sum_{n=0}^\infty B_n^{+}\frac{t^n}{n!}\,.
\end{align*}

The $A(k,n)$ are well-known (see, e.g. Riordan {\cite{Riordan}}). Spie\ss\, {\cite{Spiess}} gives explicit forms for $k=0,1,2,3$. More generally, it is shown that for $n, k \in \mathbb N$,
\begin{align*}
&A(k,n)=\sum_{\ell=0}^{k} S(k,\ell) {\ell!} \binom{n+1}{\ell+1},\\
&B(k,n)=\sum_{\ell=0}^{k} \frac{1}{\ell+1} S(k,\ell) {\ell!} \binom{n+1}{\ell+1},\\
&C(k,n)=\sum_{\ell=0}^{k} S(k,\ell) {\ell!} \binom{n+1}{\ell+1} H_{\ell+1},\\
&D(k,n)=\sum_{\ell=0}^{k} S(k,\ell) {\ell!} \binom{n+1}{\ell+1} \biggl(\frac{1}{\ell+1}H_{\ell+1}+H_{\ell+1}^{2}\biggl)\,,\\
&E(k,n)=\sum_{\ell=0}^{k} S(k,\ell) {\ell!} A_{\ell}(n)\,,\\
&F(k,n)=\sum_{\ell=0}^{k} S(k,\ell) {\ell!} \frac{1}{\ell+1} \biggl(2\binom{n}{\ell+1}+(-1)^{\ell}\biggr)\,,\\
&G(k,n)=\sum_{\ell=0}^{k} S(k,\ell) {\ell!} \biggl(\frac{2}{(\ell+1)^{2}}\binom{n}{\ell+1}-\frac{1}{(\ell+1)!}\sum_{k=2}^{\ell+1}s(\ell+1,k)H_{n}^{(2-\ell)}\biggr)\,,\\
&H(k,t)=\sum_{\ell=t-1}^{k} \frac{1}{\ell+1} S(k,\ell) s(\ell+1,t)\,,
\end{align*}
where $A_{\ell}(n)$ is a polynomial in $n$ of degree $\ell$ (see {\cite{Spiess}}).

For a positive integer $n$, we have
\begin{align}
\sum_{\ell=1}^n\ell H_{\ell}H_{n-\ell}=\frac{n(n+1)}{2}S_n-n^2 H_n+n^2\,,\quad {\cite{Spiess}} \label{bhh6}
\end{align}
where
$$
S_n:=(H_n)^2-H_n^{(2)}=\frac{2}{n!}s_{u}(n+1,3)=\sum_{k=1}^n\frac{H_{n-k}}{k}\,.
$$
Note that
$$
\sum_{k=1}^n\frac{H_{k}}{k}=\frac{(H_n)^2+H_n^{(2)}}{2}\,.\quad {\cite{Spiess}}
$$

We see that
\begin{multline}
\sum_{\ell=1}^n\ell^2 H_{\ell}H_{n-\ell}=\frac{n(n+1)(2 n+1)}{6}S_n\\
-\frac{n(13 n^2+6 n-1)}{18}H_n+\frac{n(71 n^2+30 n+7)}{108}\,,\quad {\cite{Spiess}} \label{bhh4}
\end{multline}
\begin{multline}
\sum_{\ell=1}^n\ell^3 H_{\ell}H_{n-\ell}=\left(\frac{n(n+1)}{2}\right)^{2}S_n\\
-\frac{n^2(n+1)(7 n-1)}{12}H_n+\frac{n^2(35 n^2+30 n+7)}{72}\,.\quad {\cite{Spiess}} \label{bhh5}
\end{multline}
Combining the identities (\ref{bhh6}), (\ref{bhh4}), (\ref{bhh5}), we have
\begin{multline*}
\sum_{\ell=1}^n\ell H_{\ell}^{(2)}H_{n-\ell}^{(2)}=\frac{n(n+1)(n+2)(n+3)}{12}S_n\\
-\frac{n^2(11 n^2+48 n+49)}{36}H_n+\frac{n^2(85 n^2+312 n+251)}{216}\,.
\end{multline*}

\subsection{Some lemmata}

Spie\ss \, \cite{Spiess} gives the following lemma, which will be frequently used in later sections.
\begin{Lem}[\cite{Spiess}]
Given summation formulas $\sum_{\ell=0}^{n} \binom{\ell}{p}c_{\ell}=F(n,p), n, p \in \mathbb N$, one has
$$
\sum_{\ell=0}^{n} {\ell}^{p} c_{\ell}=\sum_{\ell=0}^{p} S(p,\ell) \cdot {\ell}! \cdot F(n,\ell)\,.
\label{spiesslemma}
$$
\end{Lem}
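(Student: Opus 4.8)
The plan is to express the power $\ell^{p}$ as a linear combination of the binomial coefficients $\binom{\ell}{k}$ with Stirling coefficients, and then to interchange the order of summation so that the given hypothesis can be applied term by term.

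First I would recall the defining property of the Stirling numbers of the second kind together with the elementary identity relating falling factorials and binomial coefficients. From
$$
x^{p}=\sum_{k=0}^{p} S(p,k)\,(x)_{(k)}
\qquad\text{and}\qquad
(x)_{(k)}=k!\binom{x}{k},
$$
specializing $x=\ell$ yields, for every integer $\ell\ge 0$,
$$
\ell^{p}=\sum_{k=0}^{p} S(p,k)\,k!\,\binom{\ell}{k}.
$$

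Next I would substitute this expansion into the left-hand side and swap the two finite sums:
$$
\sum_{\ell=0}^{n}\ell^{p}c_{\ell}
=\sum_{\ell=0}^{n}\sum_{k=0}^{p} S(p,k)\,k!\,\binom{\ell}{k}c_{\ell}
=\sum_{k=0}^{p} S(p,k)\,k!\sum_{\ell=0}^{n}\binom{\ell}{k}c_{\ell}.
$$
Since the factor $S(p,k)\,k!$ is independent of $\ell$, the interchange is immediate, and the inner sum is precisely the assumed formula $\sum_{\ell=0}^{n}\binom{\ell}{k}c_{\ell}=F(n,k)$. Substituting it gives
$$
\sum_{\ell=0}^{n}\ell^{p}c_{\ell}=\sum_{k=0}^{p} S(p,k)\,k!\,F(n,k),
$$
which is the asserted identity (the index $k$ here plays the role of $\ell$ in the statement).

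I expect no genuine obstacle: all the sums are finite, so the interchange requires nothing beyond rearrangement, and the whole argument rests on the single conversion $\ell^{p}=\sum_{k} S(p,k)\,k!\,\binom{\ell}{k}$. The only point demanding care is notational bookkeeping — specifically, keeping the outer summation variable $\ell$ distinct from the Stirling summation index, which is why I rename the latter to $k$ throughout.
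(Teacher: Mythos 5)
Your proof is correct, and it is the standard argument: expand $\ell^{p}=\sum_{k=0}^{p}S(p,k)\,k!\binom{\ell}{k}$ via the defining relation $x^{p}=\sum_{k}S(p,k)(x)_{(k)}$ together with $(x)_{(k)}=k!\binom{x}{k}$, then swap the finite sums and apply the hypothesis. The paper itself states this lemma without proof (citing Spie\ss), so there is nothing further to compare; your derivation is exactly the one the cited source relies on.
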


\begin{Lem}[{\cite[Lemma 2.4]{Kamano}}]
The following identities hold:
\begin{align}\label{Kamano1}
\binom{n+r-1}{r-1}=\frac{1}{(r-1)!}\sum_{k=1}^{r} s_{u}(r,k) n^{k-1}\,.
\end{align}
\begin{align}\label{Kamano2}
\binom{n+r-1}{r-1}\biggl(\frac{1}{n+1}+\cdots+\frac{1}{n+r-1}\biggr)=\frac{1}{(r-1)!}\sum_{k=1}^{r-1} s_{u}(r,k+1) k n^{k-1}\,.
\end{align}
\end{Lem}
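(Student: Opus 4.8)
The plan is to deduce both identities from the defining generating relation $(x)^{(r)}=\sum_{k=0}^{r}s_u(r,k)x^k$ of the unsigned Stirling numbers of the first kind, after rewriting the binomial coefficient as a rising factorial.

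For \eqref{Kamano1}, I first observe that
$$
\binom{n+r-1}{r-1}=\frac{(n+1)(n+2)\cdots(n+r-1)}{(r-1)!}=\frac{n(n+1)\cdots(n+r-1)}{n\,(r-1)!}=\frac{(n)^{(r)}}{n\,(r-1)!}\,.
$$
Setting $x=n$ in the definition of $s_u(r,k)$ gives $(n)^{(r)}=\sum_{k=0}^{r}s_u(r,k)n^{k}=\sum_{k=1}^{r}s_u(r,k)n^{k}$, since $s_u(r,0)=0$ for $r\ge 1$. Dividing by $n\,(r-1)!$ then produces precisely the right-hand side $\frac{1}{(r-1)!}\sum_{k=1}^{r}s_u(r,k)n^{k-1}$.

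For \eqref{Kamano2}, I exploit that the harmonic factor is a logarithmic derivative. Since $\binom{n+r-1}{r-1}=P(n)/(r-1)!$ with $P(n)=\prod_{i=1}^{r-1}(n+i)$ is a polynomial in $n$, one has $P'(n)=\sum_{j=1}^{r-1}\prod_{i\ne j}(n+i)=P(n)\sum_{j=1}^{r-1}\frac{1}{n+j}$, whence
$$
\frac{d}{dn}\binom{n+r-1}{r-1}=\binom{n+r-1}{r-1}\Bigl(\frac{1}{n+1}+\cdots+\frac{1}{n+r-1}\Bigr)\,.
$$
Differentiating the already-established identity \eqref{Kamano1} term by term gives $\frac{d}{dn}\binom{n+r-1}{r-1}=\frac{1}{(r-1)!}\sum_{k=1}^{r}s_u(r,k)(k-1)n^{k-2}$; comparing the two expressions and noting that the $k=1$ term vanishes because of the factor $(k-1)=0$, the reindexing $k\mapsto k+1$ yields the claimed $\frac{1}{(r-1)!}\sum_{k=1}^{r-1}s_u(r,k+1)k\,n^{k-1}$.

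The computation is elementary throughout, so the only care required is bookkeeping: keeping track of the single extra factor of $n$ that relates $\binom{n+r-1}{r-1}$ to $(n)^{(r)}$, and correctly handling the vanishing boundary term and the index shift in the differentiation step. I would confirm the normalization with the small cases $r=1$ and $r=2$, where both sides collapse to $0$ or $1$ via $s_u(1,1)=s_u(2,2)=1$; these offer a cheap guard against an off-by-one error in the Stirling-number arguments, which is the most likely place to slip.
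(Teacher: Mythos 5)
Your proof is correct, but there is nothing in the paper to compare it against: the authors do not prove this lemma at all, importing it verbatim as Lemma 2.4 of Kamano \cite{Kamano}. Your argument therefore supplies a self-contained elementary verification that the paper omits. The first step is exactly right: $\binom{n+r-1}{r-1}=(n+1)(n+2)\cdots(n+r-1)/(r-1)!$, and since $s_{u}(r,0)=0$ the rising factorial $(x)^{(r)}=\sum_{k=1}^{r}s_{u}(r,k)x^{k}$ is divisible by $x$ as a polynomial, so the division producing \eqref{Kamano1} is harmless. For \eqref{Kamano2}, the logarithmic-derivative observation that $\frac{1}{n+1}+\cdots+\frac{1}{n+r-1}$ equals $P'(n)/P(n)$ for $P(n)=\prod_{i=1}^{r-1}(n+i)$ is clean, and the bookkeeping is handled correctly: the $k=1$ term dies because of the factor $k-1$, and the shift $k\mapsto k+1$ gives precisely the stated right-hand side. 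The one point you should make explicit is why differentiating \eqref{Kamano1} ``with respect to $n$'' is legitimate when that identity has so far been asserted only at integer values of $n$: either note that both sides are polynomials agreeing at infinitely many points, hence identical as polynomials, or, better, observe that your own derivation of \eqref{Kamano1} really took place in the indeterminate $x$ from the defining relation of $s_{u}(r,k)$, so the identity holds at the level of polynomials and may be differentiated formally. With that sentence added the argument is complete; the checks at $r=1,2$, where the sums are empty or a single term, are a sensible guard against off-by-one errors but are not needed logically.
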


We now prove some lemmas which will be needed in later sections.

\begin{Lem}\label{binom1}
For $r, s, n \in \mathbb N$ with $0 \le \ell \le n$, we have
\begin{align*}
\binom{\ell+r-1}{r-1}\binom{n-\ell+s-1}{s-1}=\sum_{t=0}^{r+s-2}a(r,s,n,t)\ell^t\,,
\end{align*}
where
\begin{multline*}
a(r,s,n,t)=\frac{1}{(r-1)!(s-1)!}\\
\times\sum_{\substack{t_{1}+t_{2}=t\\ 0 \le t_{1} \le r-1\\ 0 \le t_{2} \le s-1 }} s_{u}(r,t_{1}+1) \sum_{k_{2}=t_{2}}^{s-1} s_{u}(s,k_{2}+1) \binom{k_{2}}{t_{2}} (-1)^{t_{2}} n^{k_{2}-t_{2}}.
\end{multline*}
\end{Lem}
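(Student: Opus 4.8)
The plan is to apply the Kamano expansion \eqref{Kamano1} to \emph{each} of the two binomial factors separately, so that both become polynomials in $\ell$ (the first directly, the second after expanding a power of $n-\ell$), and then to multiply the two polynomials and collect the coefficient of each power $\ell^t$. First I would write, by \eqref{Kamano1} applied with the variable $\ell$,
\[
\binom{\ell+r-1}{r-1}=\frac{1}{(r-1)!}\sum_{t_{1}=0}^{r-1}s_{u}(r,t_{1}+1)\,\ell^{t_{1}}\,,
\]
where I have reindexed the Kamano sum by $t_1=k-1$. Similarly, applying \eqref{Kamano1} with the variable $n-\ell$ gives
\[
\binom{n-\ell+s-1}{s-1}=\frac{1}{(s-1)!}\sum_{k_{2}=0}^{s-1}s_{u}(s,k_{2}+1)\,(n-\ell)^{k_{2}}\,.
\]

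The next step is to turn the second factor into a genuine polynomial in $\ell$ by the binomial theorem, $(n-\ell)^{k_{2}}=\sum_{t_{2}=0}^{k_{2}}\binom{k_{2}}{t_{2}}(-1)^{t_{2}}n^{k_{2}-t_{2}}\ell^{t_{2}}$. Substituting this and interchanging the order of the $k_2$- and $t_2$-summations — so that $t_2$ now runs from $0$ to $s-1$ and $k_2$ from $t_2$ to $s-1$ — yields
\[
\binom{n-\ell+s-1}{s-1}=\frac{1}{(s-1)!}\sum_{t_{2}=0}^{s-1}\Biggl(\sum_{k_{2}=t_{2}}^{s-1}s_{u}(s,k_{2}+1)\binom{k_{2}}{t_{2}}(-1)^{t_{2}}n^{k_{2}-t_{2}}\Biggr)\ell^{t_{2}}\,.
\]
Here the inner parenthesised sum is exactly the quantity appearing inside the definition of $a(r,s,n,t)$, which is a reassuring sanity check that the reindexing has been done correctly.

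Finally I would multiply the two displayed expansions and gather powers of $\ell$. The product is a double sum over $t_1\in\{0,\dots,r-1\}$ and $t_2\in\{0,\dots,s-1\}$ whose generic term carries $\ell^{t_1+t_2}$; setting $t=t_1+t_2$ (which ranges over $0\le t\le r+s-2$) and collecting all pairs $(t_1,t_2)$ with $t_1+t_2=t$ subject to the stated range constraints produces precisely $\sum_{t=0}^{r+s-2}a(r,s,n,t)\ell^{t}$ with the claimed coefficient. The proof is thus essentially a bookkeeping computation; the only place demanding care is the double reindexing — the shift $k\mapsto t_1+1$ and $k_2\mapsto k_2+1$ in the two Kamano sums, and especially the swap of summation order in the second factor that must be performed before the factors are multiplied — so that the triple sum collapses into the advertised form with the correct index bounds $0\le t_1\le r-1$, $0\le t_2\le s-1$, $t_2\le k_2\le s-1$.
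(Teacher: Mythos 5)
Your proposal is correct and follows essentially the same route as the paper's own proof: expand both binomial coefficients via the Kamano identity \eqref{Kamano1} (the second in the variable $n-\ell$), expand $(n-\ell)^{k_2}$ by the binomial theorem, interchange the $k_2$- and $t_2$-summations, and collect coefficients of $\ell^{t}$ with $t=t_1+t_2$. The only difference is cosmetic — you perform the summation swap inside the second factor before multiplying, whereas the paper multiplies first and then swaps — so the two arguments are the same computation in a slightly different order.
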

\begin{proof}
By using \ref{Kamano1}, we have
\begin{align*}
&\quad \binom{\ell+r-1}{r-1}\binom{n-\ell+s-1}{s-1}\\
&=\frac{1}{(r-1)!}\sum_{k_{1}=1}^{r} s_{u}(r,k_{1}) \ell^{k_{1}-1}\frac{1}{(s-1)!}\sum_{k_{2}=1}^{s} s_{u}(s,k_{2}) (n-\ell)^{k_{2}-1}\\
&=\frac{1}{(r-1)!(s-1)!}\\
&\quad\times\sum_{t_{1}=0}^{r-1} s_{u}(r,t_{1}+1) \ell^{t_{1}}\sum_{k_{2}=0}^{s-1} s_{u}(s,k_{2}+1) \sum_{t_{2}=0}^{k_{2}}\binom{k_{2}}{t_{2}}\ell^{t_{2}}(-1)^{t_{2}}n^{k_{2}-t_{2}}\\
&=\frac{1}{(r-1)!(s-1)!}\\
&\quad\times\sum_{t_{1}=0}^{r-1} s_{u}(r,t_{1}+1) \ell^{t_{1}} \sum_{t_{2}=0}^{s-1}\ell^{t_{2}}\sum_{k_{2}=t_{2}}^{s-1}s_{u}(s,k_{2}+1)\binom{k_{2}}{t_{2}}(-1)^{t_{2}}n^{k_{2}-t_{2}}\\
&=\frac{1}{(r-1)!(s-1)!}\\
&\quad\times\sum_{t=0}^{r+s-2}\ell^t \sum_{\substack{t_{1}+t_{2}=t\\ 0 \le t_{1} \le r-1\\ 0 \le t_{2} \le s-1 }} s_{u}(r,t_{1}+1) \sum_{k_{2}=t_{2}}^{s-1} s_{u}(s,k_{2}+1) \binom{k_{2}}{t_{2}} (-1)^{t_{2}} n^{k_{2}-t_{2}}\,.
\end{align*}
\end{proof}

\begin{Lem}\label{binom2}
For $r, s, n \in \mathbb N$ with $0 \le \ell \le n$, we have
\begin{align*}
&\binom{\ell+r-1}{r-1}\binom{n-\ell+s-1}{s-1}\biggl(\frac{1}{n-\ell+1}+\cdots+\frac{1}{n-\ell+s-1}\biggr)\\
&=\sum_{t=0}^{r+s-3}a_{1}(r,s,n,t)\ell^t\,,
\end{align*}
where
\begin{multline*}
\quad a_{1}(r,s,n,t)=\frac{1}{(r-1)!(s-1)!}\\
\times \sum_{\substack{t_{1}+t_{2}=t\\ 0 \le t_{1} \le r-1\\ 0 \le t_{2} \le s-2 }} s_{u}(r,t_{1}+1) \sum_{k_{2}=t_{2}}^{s-2} s_{u}(s,k_{2}+2) (k_{2}+1) \binom{k_{2}}{t_{2}} (-1)^{t_{2}} n^{k_{2}-t_{2}}\,.
\end{multline*}
\end{Lem}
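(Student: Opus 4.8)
The plan is to mirror the proof of Lemma \ref{binom1} almost verbatim, the only new ingredient being that the second binomial now carries the extra partial-harmonic factor $\frac{1}{n-\ell+1}+\cdots+\frac{1}{n-\ell+s-1}$. The key observation is that this factor together with $\binom{n-\ell+s-1}{s-1}$ is precisely the left-hand side of \eqref{Kamano2} with $n$ replaced by $n-\ell$ and $r$ replaced by $s$. So instead of expanding the second binomial alone via \eqref{Kamano1}, I would expand the whole combined factor at once via \eqref{Kamano2}.

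Concretely, first I would write the first binomial by \eqref{Kamano1} as $\binom{\ell+r-1}{r-1}=\frac{1}{(r-1)!}\sum_{t_1=0}^{r-1}s_u(r,t_1+1)\ell^{t_1}$. Then, applying \eqref{Kamano2} in the form described above gives
\[
\binom{n-\ell+s-1}{s-1}\Bigl(\tfrac{1}{n-\ell+1}+\cdots+\tfrac{1}{n-\ell+s-1}\Bigr)=\frac{1}{(s-1)!}\sum_{k=1}^{s-1}s_u(s,k+1)\,k\,(n-\ell)^{k-1}.
\]
After the shift $k\mapsto k_2+1$ this reads $\frac{1}{(s-1)!}\sum_{k_2=0}^{s-2}s_u(s,k_2+2)(k_2+1)(n-\ell)^{k_2}$.

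Next I would expand $(n-\ell)^{k_2}=\sum_{t_2=0}^{k_2}\binom{k_2}{t_2}(-1)^{t_2}\ell^{t_2}n^{k_2-t_2}$ by the binomial theorem and interchange the order of the $k_2$- and $t_2$-sums, exactly as in Lemma \ref{binom1}; this turns the combined factor into $\frac{1}{(s-1)!}\sum_{t_2=0}^{s-2}\ell^{t_2}\sum_{k_2=t_2}^{s-2}s_u(s,k_2+2)(k_2+1)\binom{k_2}{t_2}(-1)^{t_2}n^{k_2-t_2}$. Multiplying by the expansion of the first binomial and grouping monomials by the total degree $t=t_1+t_2$ then yields the claimed coefficient $a_1(r,s,n,t)$.

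The step requiring the most care is purely the index bookkeeping, not any genuine analytic difficulty: I must track that the presence of the factor $k$ (hence the shift by two rather than one in the unsigned Stirling argument $s_u(s,k_2+2)$) forces the inner index $t_2$ to range only up to $s-2$, so that the total degree is now $t_1+t_2\le(r-1)+(s-2)=r+s-3$ rather than $r+s-2$. Verifying that this upper bound is attained and that the constraints $0\le t_1\le r-1$ and $0\le t_2\le s-2$ emerge correctly after the two interchanges of summation is the only place where an off-by-one slip could occur; everything else is a direct transcription of the argument for Lemma \ref{binom1}.
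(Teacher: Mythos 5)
Your proposal is correct and follows essentially the same route as the paper's own proof: expand the first binomial via \eqref{Kamano1}, treat the second binomial together with the partial-harmonic factor as a single unit expanded via \eqref{Kamano2} (with $n\mapsto n-\ell$, $r\mapsto s$), then apply the binomial theorem to $(n-\ell)^{k_2}$, swap the $k_2$- and $t_2$-sums, and collect by total degree $t=t_1+t_2\le r+s-3$. Your index bookkeeping matches the paper's exactly (and is in fact cleaner, since the paper's final display contains a typo $s_u(s,k_2+1)$ where $s_u(s,k_2+2)$ is meant).
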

\begin{proof}
By using (\ref{Kamano1}) and (\ref{Kamano2}), we have
\begin{align*}
&\quad \binom{\ell+r-1}{r-1}\binom{n-\ell+s-1}{s-1}\biggl(\frac{1}{n-\ell+1}+\cdots+\frac{1}{n-\ell+s-1}\biggr)\\
&=\frac{1}{(r-1)!}\sum_{k_{1}=1}^{r} s_{u}(r,k_{1}) \ell^{k_{1}-1}\frac{1}{(s-1)!}\sum_{k_{2}=1}^{s-1} s_{u}(s,k_{2}+1) k_{2} (n-\ell)^{k_{2}-1}\\
&=\frac{1}{(r-1)!(s-1)!}\sum_{t_{1}=0}^{r-1} s_{u}(r,t_{1}+1) \ell^{t_{1}}\\
&\quad\times\sum_{k_{2}=0}^{s-2} s_{u}(s,k_{2}+2) (k_{2}+1)\sum_{t_{2}=0}^{k_{2}}\binom{k_{2}}{t_{2}}\ell^{t_{2}}(-1)^{t_{2}}n^{k_{2}-t_{2}}\\
&=\frac{1}{(r-1)!(s-1)!}\sum_{t_{1}=0}^{r-1} s_{u}(r,t_{1}+1) \ell^{t_{1}}\\
&\quad\times \sum_{t_{2}=0}^{s-2}\ell^{t_{2}}\sum_{k_{2}=t_{2}}^{s-2}s_{u}(s,k_{2}+2) (k_{2}+1)\binom{k_{2}}{t_{2}}(-1)^{t_{2}}n^{k_{2}-t_{2}}\\
&=\frac{1}{(r-1)!(s-1)!}\sum_{t=0}^{r+s-3}\ell^t \sum_{\substack{t_{1}+t_{2}=t\\ 0 \le t_{1} \le r-1\\ 0 \le t_{2} \le s-2 }} s_{u}(r,t_{1}+1)\\
&\quad\times \sum_{k_{2}=t_{2}}^{s-2} s_{u}(s,k_{2}+1)(k_{2}+1) \binom{k_{2}}{t_{2}} (-1)^{t_{2}} n^{k_{2}-t_{2}}\,.
\end{align*}
\end{proof}

\begin{Lem}\label{binom3}
For $r, s, n \in \mathbb N$ with $0 \le \ell \le n$, we have
\begin{align*}
\binom{\ell+r-1}{r-1}\binom{n-\ell+s-1}{s-1}\biggl(\frac{1}{\ell+1}+\cdots+\frac{1}{\ell+r-1}\biggr)
=\sum_{t=0}^{r+s-3}a_{2}(r,s,n,t)\ell^t\,,
\end{align*}
where
\begin{multline*}
a_{2}(r,s,n,t)=\frac{1}{(r-1)!(s-1)!}\\
\times\sum_{\substack{t_{1}+t_{2}=t\\ 0 \le t_{1} \le r-2\\ 0 \le t_{2} \le s-1 }} s_{u}(r,t_{1}+2)(t_{1}+1) \sum_{k_{2}=t_{2}}^{s-1} s_{u}(s,k_{2}+1) \binom{k_{2}}{t_{2}} (-1)^{t_{2}} n^{k_{2}-t_{2}}\,.
\end{multline*}
\end{Lem}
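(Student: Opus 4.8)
The plan is to mirror the proof of Lemma~\ref{binom2}. There, the harmonic-type factor was attached to the second binomial $\binom{n-\ell+s-1}{s-1}$, so one applied (\ref{Kamano2}) to that combination and (\ref{Kamano1}) to the bare first binomial. Here the harmonic factor $\frac{1}{\ell+1}+\cdots+\frac{1}{\ell+r-1}$ is attached instead to the first binomial $\binom{\ell+r-1}{r-1}$, so I would apply (\ref{Kamano2}) (with $n$ replaced by $\ell$, keeping $r$) to the product $\binom{\ell+r-1}{r-1}\bigl(\frac{1}{\ell+1}+\cdots+\frac{1}{\ell+r-1}\bigr)$, and apply (\ref{Kamano1}) (with $n$ replaced by $n-\ell$) to the lone second binomial $\binom{n-\ell+s-1}{s-1}$.

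First I would write, using (\ref{Kamano2}),
$$\binom{\ell+r-1}{r-1}\Bigl(\frac{1}{\ell+1}+\cdots+\frac{1}{\ell+r-1}\Bigr)=\frac{1}{(r-1)!}\sum_{k_{1}=1}^{r-1}s_{u}(r,k_{1}+1)k_{1}\ell^{k_{1}-1},$$
and reindex by $t_{1}=k_{1}-1$ so the exponent of $\ell$ is displayed directly; the summand becomes $s_{u}(r,t_{1}+2)(t_{1}+1)\ell^{t_{1}}$ with $0\le t_{1}\le r-2$. For the second factor I would reuse verbatim the expansion already carried out in Lemma~\ref{binom1}, namely
$$\binom{n-\ell+s-1}{s-1}=\frac{1}{(s-1)!}\sum_{k_{2}=0}^{s-1}s_{u}(s,k_{2}+1)\sum_{t_{2}=0}^{k_{2}}\binom{k_{2}}{t_{2}}\ell^{t_{2}}(-1)^{t_{2}}n^{k_{2}-t_{2}},$$
obtained from (\ref{Kamano1}) by shifting the summation index and expanding $(n-\ell)^{k_{2}}$ by the binomial theorem.

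Multiplying these two expressions yields a multiple sum over $t_{1}$ and $(k_{2},t_{2})$. I would then swap the order of the inner $t_{2}$ and $k_{2}$ summations so that for each fixed $t_{2}$ the index $k_{2}$ runs from $t_{2}$ to $s-1$ (exactly as in Lemmas~\ref{binom1} and~\ref{binom2}), isolating the factor $\ell^{t_{2}}$. Finally I would collect the total power of $\ell$ by setting $t=t_{1}+t_{2}$; since $0\le t_{1}\le r-2$ and $0\le t_{2}\le s-1$, the index $t$ ranges over $0,\dots,r+s-3$, and the coefficient of $\ell^{t}$ is precisely the stated $a_{2}(r,s,n,t)$.

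The computation is routine pattern-matching against the two previous lemmas, so there is no genuine obstacle beyond bookkeeping. The one point worth attention is the top index $r+s-3$, one less than the $r+s-2$ of Lemma~\ref{binom1}: the harmonic factor replaces (\ref{Kamano1})'s degree-$(r-1)$ polynomial in $\ell$ by (\ref{Kamano2})'s degree-$(r-2)$ polynomial, so the first factor now contributes only exponents $t_{1}\le r-2$. The only place an error could realistically slip in is keeping the two Stirling-index shifts straight: $s_{u}(r,t_{1}+2)$ with the extra weight $(t_{1}+1)$ for the first factor, against the plain $s_{u}(s,k_{2}+1)$ for the second.
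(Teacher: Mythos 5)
Your proposal is correct and follows essentially the same route as the paper: apply (\ref{Kamano2}) to the product of the first binomial with its harmonic-type factor, apply (\ref{Kamano1}) to the bare second binomial, expand $(n-\ell)^{k_2}$ binomially, swap the $k_2$ and $t_2$ sums, and collect powers of $\ell$ via $t=t_1+t_2$. Your explicit note that $t_1$ is capped at $r-2$ (forcing the top index $r+s-3$) is exactly the right bookkeeping point, and in fact is stated more carefully than in the paper's own final display, which carries a typo ($t_1\le r-1$) in the constraint under the sum.
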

\begin{proof}
By using (\ref{Kamano1}) and (\ref{Kamano2}), we have
\begin{align*}
&\quad \binom{\ell+r-1}{r-1}\binom{n-\ell+s-1}{s-1}\biggl(\frac{1}{\ell+1}+\cdots+\frac{1}{\ell+r-1}\biggr)\\
&=\frac{1}{(r-1)!}\sum_{k_{1}=1}^{r-1} s_{u}(r,k_{1}+1)k_{1} \ell^{k_{1}-1}\frac{1}{(s-1)!}\sum_{k_{2}=1}^{s} s_{u}(s,k_{2}) (n-\ell)^{k_{2}-1}\\
&=\frac{1}{(r-1)!(s-1)!}\sum_{t_{1}=0}^{r-2} s_{u}(r,t_{1}+2)(t_{1}+1) \ell^{t_{1}}\\
&\quad\times\sum_{k_{2}=0}^{s-1} s_{u}(s,k_{2}+1) \sum_{t_{2}=0}^{k_{2}}\binom{k_{2}}{t_{2}}\ell^{t_{2}}(-1)^{t_{2}}n^{k_{2}-t_{2}}\\
&=\frac{1}{(r-1)!(s-1)!}\sum_{t_{1}=0}^{r-2} s_{u}(r,t_{1}+2)(t_{1}+1) \ell^{t_{1}}\\
&\quad\times \sum_{t_{2}=0}^{s-1}\ell^{t_{2}}\sum_{k_{2}=t_{2}}^{s-1}s_{u}(s,k_{2}+1)\binom{k_{2}}{t_{2}}(-1)^{t_{2}}n^{k_{2}-t_{2}}\\
&=\frac{1}{(r-1)!(s-1)!}\sum_{t=0}^{r+s-3}\ell^t \sum_{\substack{t_{1}+t_{2}=t\\ 0 \le t_{1} \le r-1\\ 0 \le t_{2} \le s-1 }} s_{u}(r,t_{1}+2)(t_{1}+1)\\
&\quad\times \sum_{k_{2}=t_{2}}^{s-1} s_{u}(s,k_{2}+1) \binom{k_{2}}{t_{2}} (-1)^{t_{2}} n^{k_{2}-t_{2}}\,.
\end{align*}
\end{proof}

\begin{Lem}\label{binom4}
For $r, s, n \in \mathbb N$ with $0 \le \ell \le n$, we have
\begin{multline*}
\binom{\ell+r-1}{r-1}\binom{n-\ell+s-1}{s-1}
\biggl(\frac{1}{\ell+1}+\cdots+\frac{1}{\ell+r-1}\biggr)\\
\times \biggl(\frac{1}{n-\ell+1}+\cdots+\frac{1}{n-\ell+s-1}\biggr)
=\sum_{t=0}^{r+s-4}a_{3}(r,s,n,t)\ell^t\,,
\end{multline*}
where
\begin{multline*}
\quad a_{3}(r,s,n,t)=\frac{1}{(r-1)!(s-1)!}\sum_{\substack{t_{1}+t_{2}=t\\ 0 \le t_{1} \le r-2\\ 0 \le t_{2} \le s-2 }} s_{u}(r,t_{1}+2)(t_{1}+1)\\
\times \sum_{k_{2}=t_{2}}^{s-1} s_{u}(s,k_{2}+2)(k_{2}+1) \binom{k_{2}}{t_{2}} (-1)^{t_{2}} n^{k_{2}-t_{2}}\,.
\end{multline*}
\end{Lem}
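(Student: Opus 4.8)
The plan is to follow exactly the pattern of Lemmas \ref{binom2} and \ref{binom3}, now applying the harmonic-type identity (\ref{Kamano2}) to \emph{both} binomial factors simultaneously. Concretely, I would first rewrite each of the two ``binomial times partial-harmonic-sum'' blocks using (\ref{Kamano2}): the $\ell$-block becomes $\binom{\ell+r-1}{r-1}\bigl(\frac1{\ell+1}+\cdots+\frac1{\ell+r-1}\bigr)=\frac1{(r-1)!}\sum_{k_1=1}^{r-1}s_u(r,k_1+1)k_1\ell^{k_1-1}$, and the $(n-\ell)$-block becomes $\binom{n-\ell+s-1}{s-1}\bigl(\frac1{n-\ell+1}+\cdots+\frac1{n-\ell+s-1}\bigr)=\frac1{(s-1)!}\sum_{k_2=1}^{s-1}s_u(s,k_2+1)k_2(n-\ell)^{k_2-1}$. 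The left-hand side is just the product of these two expressions, so everything reduces to multiplying two finite sums and collecting powers of $\ell$.

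Next I would carry out the index shifts that align the sums with the target. In the $\ell$-sum I set $t_1=k_1-1$, turning it into $\sum_{t_1=0}^{r-2}s_u(r,t_1+2)(t_1+1)\ell^{t_1}$, which already supplies the factor $s_u(r,t_1+2)(t_1+1)$ and the range $0\le t_1\le r-2$ appearing in $a_3$. In the $(n-\ell)$-sum I shift the index so that it reads $\frac1{(s-1)!}\sum_{k_2=0}^{s-2}s_u(s,k_2+2)(k_2+1)(n-\ell)^{k_2}$, and then expand $(n-\ell)^{k_2}=\sum_{t_2=0}^{k_2}\binom{k_2}{t_2}(-1)^{t_2}\ell^{t_2}n^{k_2-t_2}$ by the binomial theorem. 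Swapping the order of summation (exactly as in the proof of Lemma \ref{binom2}) converts this into $\sum_{t_2=0}^{s-2}\ell^{t_2}\sum_{k_2=t_2}^{s-2}s_u(s,k_2+2)(k_2+1)\binom{k_2}{t_2}(-1)^{t_2}n^{k_2-t_2}$, which is precisely the $t_2$-part of $a_3$ (the stated upper limit $s-1$ is harmless, since $s_u(s,s+1)=0$).

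Finally I would multiply the reorganized $\ell$-factor and $(n-\ell)$-factor, producing a double sum in $\ell^{t_1}$ and $\ell^{t_2}$; combining the two powers as $\ell^{t_1+t_2}$ and setting $t=t_1+t_2$ collects the coefficient of $\ell^t$. Since $0\le t_1\le r-2$ and $0\le t_2\le s-2$, the exponent $t$ runs from $0$ to $r+s-4$, matching the claimed summation range, and the coefficient is exactly $a_3(r,s,n,t)$ after factoring out $1/((r-1)!(s-1)!)$.

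I do not anticipate a genuine obstacle here: the statement is the natural ``both-sided'' analogue of Lemmas \ref{binom2} and \ref{binom3}, where those lemmas applied (\ref{Kamano2}) to only one factor. The only care needed is bookkeeping — keeping the two independent index shifts straight, performing the summation-order swap on the $(n-\ell)$ side correctly, and verifying that the constraints $t_1+t_2=t$ with $t_1\le r-2$ and $t_2\le s-2$ emerge cleanly when the two polynomial factors are multiplied.
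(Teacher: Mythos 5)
Your proposal is correct and follows essentially the same route as the paper's own proof: apply (\ref{Kamano2}) to both the $\ell$-block and the $(n-\ell)$-block, shift indices, expand $(n-\ell)^{k_2}$ by the binomial theorem, swap the order of summation, and collect the coefficient of $\ell^{t_1+t_2}$. Your side remark that the stated upper limit $s-1$ in $a_3$ is harmless because $s_u(s,s+1)=0$ is a correct observation of a minor discrepancy that the paper's proof (which ends with upper limit $s-2$) leaves unexplained.
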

\begin{proof}
By using (\ref{Kamano1}) and (\ref{Kamano2}), we have
\begin{align*}
&\binom{\ell+r-1}{r-1}\binom{n-\ell+s-1}{s-1}
\biggl(\frac{1}{\ell+1}+\cdots+\frac{1}{\ell+r-1}\biggr)\\
&\qquad\times \biggl(\frac{1}{n-\ell+1}+\cdots+\frac{1}{n-\ell+s-1}\biggr)\\
&=\frac{1}{(r-1)!}\sum_{k_{1}=1}^{r-1} s_{u}(r,k_{1}+1)k_{1} \ell^{k_{1}-1}\frac{1}{(s-1)!}\sum_{k_{2}=1}^{s-1} s_{u}(s,k_{2}+1) k_{2}(n-\ell)^{k_{2}-1}\\
&=\frac{1}{(r-1)!(s-1)!}\sum_{t_{1}=0}^{r-2} s_{u}(r,t_{1}+2)(t_{1}+1) \ell^{t_{1}}\\
&\qquad\times\sum_{k_{2}=0}^{s-2} s_{u}(s,k_{2}+2) (k_{2}+1) \sum_{t_{2}=0}^{k_{2}}\binom{k_{2}}{t_{2}}\ell^{t_{2}}(-1)^{t_{2}}n^{k_{2}-t_{2}}\\
&=\frac{1}{(r-1)!(s-1)!}\sum_{t_{1}=0}^{r-2} s_{u}(r,t_{1}+2)(t_{1}+1) \ell^{t_{1}}\\
&\qquad\times \sum_{t_{2}=0}^{s-2}\ell^{t_{2}}\sum_{k_{2}=t_{2}}^{s-2}s_{u}(s,k_{2}+2)(k_{2}+1)\binom{k_{2}}{t_{2}}(-1)^{t_{2}}n^{k_{2}-t_{2}}\\
&=\frac{1}{(r-1)!(s-1)!}\sum_{t=0}^{r+s-4}\ell^t \sum_{\substack{t_{1}+t_{2}=t\\ 0 \le t_{1} \le r-2\\ 0 \le t_{2} \le s-2 }} s_{u}(r,t_{1}+2)(t_{1}+1)\\
&\qquad\times\sum_{k_{2}=t_{2}}^{s-2} s_{u}(s,k_{2}+2)(k_{2}+1) \binom{k_{2}}{t_{2}} (-1)^{t_{2}} n^{k_{2}-t_{2}}\,.
\end{align*}
\end{proof}

\section{Main results}

In this section, we study the summations
$$
\sum_{\ell=0}^{n} {\ell}^{p} h_{\ell}^{(r)}  h_{n-\ell}^{(s)},\quad
\sum_{\ell=0}^n (\ell)^{(p)}h_{\ell}^{(r)}  h_{n-\ell}^{(s)},\quad
\sum_{\ell=0}^n \ell^{p}(h_{\ell}^{(r)})^{2},\quad
\sum_{\ell=0}^n (\ell)^{(p)}(h_{\ell}^{(r)})^{2}\,.
$$

\begin{theorem}\label{hypersturc2}
For $n,r,p\ge 1$, we have
\begin{align}
\sum_{\ell=0}^{n} {\ell}^{p} h_{\ell}^{(r)}  h_{n-\ell}^{(s)}=A(p,r,s,n)S_{n}+B(p,r,s,n)H_{n}+C(p,r,s,n)\,,
\end{align}
where \begin{align*}
&A(p,r,s,n)=\sum_{t=0}^{r+s-2} a(r,s,n,t)A(p+t,n),\\
&B(p,r,s,n)=\sum_{t=0}^{r+s-3} (a_{1}(r,s,n,t)+a_{2}(r,s,n,t))A(p+t,n)\\
&\quad+\sum_{t=0}^{r+s-2} a(r,s,n,t)\\
&\qquad\times\biggl(A(p+t,n)(\frac{2}{n+1}-H_{s-1}-H_{r-1})-B(p+t,n)-C(p+t,n)\biggr)\,,\\
&C(p,r,s,n)=\sum_{t=0}^{r+s-2} a(r,s,n,t)\biggl(D(p+t,n)-\frac{1}{n+1}(B(p+t,n)-C(p+t,n))\biggr)\\
&+\sum_{t=0}^{r+s-3} a_{1}(r,s,n,t)\biggl(A(p+t,n)\frac{1}{n+1}-B(p+t,n)\biggr)\\
&-H_{s-1}\sum_{t=0}^{r+s-2} a_{1}(r,s,n,t)\biggl(A(p+t,n)\frac{1}{n+1}-B(p+t,n)\biggr)\\
&+\sum_{t=0}^{r+s-3} a_{2}(r,s,n,t)\biggl(A(p+t,n)\frac{1}{n+1}-C(p+t,n)\biggr)\\
&+\sum_{t=0}^{r+s-4} a_{3}(r,s,n,t) \sum_{\ell=0}^{n} {\ell}^{p+t}\\
&-H_{s-1}\sum_{t=0}^{r+s-3} a_{2}(r,s,n,t) \sum_{\ell=0}^{n} {\ell}^{p+t}\\
&-H_{r-1}\sum_{t=0}^{r+s-2} a(r,s,n,t)\biggl(A(p+t,n)\frac{1}{n+1}-C(p+t,n)\biggr)\\
&-H_{r-1}\sum_{t=0}^{r+s-3} a_{1}(r,s,n,t) \sum_{\ell=0}^{n} {\ell}^{p+t}+H_{r-1}H_{s-1}\sum_{t=0}^{r+s-2} a(r,s,n,t) \sum_{\ell=0}^{n} {\ell}^{p+t}\,.
\end{align*}

\end{theorem}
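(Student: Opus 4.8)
The plan is to reduce the double-weighted sum to the four base identities of Spie\ss\ for $\sum_\ell \ell^k$, $\sum_\ell \ell^k H_\ell$, $\sum_\ell \ell^k H_{n-\ell}$ and $\sum_\ell \ell^k H_\ell H_{n-\ell}$, using the closed form \eqref{h01} to strip off the binomial factors. First I would apply \eqref{h01} to both factors and split each harmonic difference via $H_{\ell+r-1}=H_\ell+\bigl(\frac{1}{\ell+1}+\cdots+\frac{1}{\ell+r-1}\bigr)$, so that
\[
h_\ell^{(r)}=\binom{\ell+r-1}{r-1}\Bigl(H_\ell+\Bigl(\tfrac{1}{\ell+1}+\cdots+\tfrac{1}{\ell+r-1}\Bigr)-H_{r-1}\Bigr),
\]
and analogously for $h_{n-\ell}^{(s)}$ with $H_{n-\ell}$ and the tail $\frac{1}{n-\ell+1}+\cdots+\frac{1}{n-\ell+s-1}$.

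Multiplying the two factors, the product of the two harmonic-difference brackets expands into nine pieces: the leading $H_\ell H_{n-\ell}$; the three mixed pieces in which one or both $H$'s are replaced by a finite tail; the four pieces carrying a single scalar $H_{r-1}$ or $H_{s-1}$; and the pure scalar $H_{r-1}H_{s-1}$. To each piece I attach the corresponding product of binomials (possibly times one or both tails) and invoke Lemmas~\ref{binom1}--\ref{binom4}: the bare binomial product becomes $\sum_t a(r,s,n,t)\ell^t$, the product times the $(n-\ell)$-tail becomes $\sum_t a_1(r,s,n,t)\ell^t$, times the $\ell$-tail becomes $\sum_t a_2(r,s,n,t)\ell^t$, and times both tails becomes $\sum_t a_3(r,s,n,t)\ell^t$. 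After this substitution $h_\ell^{(r)}h_{n-\ell}^{(s)}$ is a finite combination of terms (polynomial in $\ell$) $\times$ (one of $H_\ell H_{n-\ell}$, $H_\ell$, $H_{n-\ell}$, $1$), with $H_{r-1},H_{s-1}$ as constant multipliers.

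I would then multiply by $\ell^p$, sum over $0\le\ell\le n$, interchange the finite sums, and apply the four identities of Spie\ss\ with exponent $k=p+t$, namely $\sum_\ell \ell^{p+t}H_\ell = A(p+t,n)H_{n+1}-B(p+t,n)$, $\sum_\ell \ell^{p+t}H_{n-\ell}=A H_{n+1}-C$, $\sum_\ell \ell^{p+t}H_\ell H_{n-\ell}=A S_{n+1}-(B+C)H_{n+1}+D$, together with $\sum_\ell\ell^{p+t}=A(p+t,n)$. Finally I would pass to the $S_n,H_n$ normalization through $H_{n+1}=H_n+\frac{1}{n+1}$ and $S_{n+1}=(H_{n+1})^2-H_{n+1}^{(2)}=S_n+\frac{2H_n}{n+1}$, and read off the three coefficients. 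Only the leading $H_\ell H_{n-\ell}$ term feeds the $S_n$-coefficient, giving $A(p,r,s,n)=\sum_t a(r,s,n,t)A(p+t,n)$; the $H_n$-coefficient then collects the $-(B+C)$ from the leading term, the residual $\frac{2}{n+1}A$ produced by the $S_{n+1}\to S_n$ shift, and the $A$-parts of the $H_\ell$- and $H_{n-\ell}$-terms (the latter weighted by $-H_{s-1}$ and $-H_{r-1}$), producing exactly $B(p,r,s,n)$.

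The main obstacle is the bookkeeping in the constant term $C(p,r,s,n)$. Because the four lemmas carry different top indices ($r+s-2$, $r+s-3$, $r+s-3$, $r+s-4$) and each of these identities sheds a $\frac{1}{n+1}$-piece once $H_{n+1}$ is replaced by $H_n$, the constant is assembled from nine separate contributions whose summation ranges and scalar prefactors $1,H_{s-1},H_{r-1},H_{r-1}H_{s-1}$ must all be tracked without collision. Keeping the $S_n$-, $H_n$- and constant-parts disjoint during the $S_{n+1}\to S_n$ and $H_{n+1}\to H_n$ substitutions is where the computation is most delicate, and I would verify the resulting $C(p,r,s,n)$ against a small explicit case (e.g. $r=s=2$) to guard against sign and index slips.
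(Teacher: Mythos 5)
Your proposal is correct and follows essentially the same route as the paper's own proof: expanding both factors via \eqref{h01}, splitting the product of harmonic differences into the same nine pieces, converting binomial-times-tail products to polynomials in $\ell$ via Lemmas \ref{binom1}--\ref{binom4}, applying Spie\ss's four identities with exponent $p+t$, and normalizing $H_{n+1}\to H_n$, $S_{n+1}\to S_n$ to read off the coefficients. The only difference is organizational (the paper works part by part rather than assembling at the end), so there is nothing substantive to add.
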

\begin{proof}
With the help of (\ref{h01}), we have
\begin{align*}
&\sum_{\ell=0}^{n} {\ell}^{p} h_{\ell}^{(r)}  h_{n-\ell}^{(s)}\notag\\
&=\sum_{\ell=0}^{n} {\ell}^{p} \binom{\ell+r-1}{r-1}(H_{\ell+r-1}-H_{r-1}) \binom{n-\ell+s-1}{s-1}(H_{n-\ell+s-1}-H_{s-1})\notag\\
&=\sum_{\ell=0}^{n} {\ell}^{p} \binom{\ell+r-1}{r-1}\binom{n-\ell+s-1}{s-1}\\
&\quad\times\biggl(H_{\ell}H_{n-\ell}+H_{\ell}\biggl(\frac{1}{n-\ell+1}+\cdots+\frac{1}{n-\ell+s-1}\biggr)\\
&\qquad -H_{\ell}H_{s-1}+\biggl(\frac{1}{\ell+1}+\cdots+\frac{1}{\ell+r-1}\biggr)H_{n-\ell}\\
&\qquad +\biggl(\frac{1}{\ell+1}+\cdots+\frac{1}{\ell+r-1}\biggr)\biggl(\frac{1}{n-\ell+1}+\cdots+\frac{1}{n-\ell+s-1}\biggr)\\
&\qquad -\biggl(\frac{1}{\ell+1}+\cdots+\frac{1}{\ell+r-1}\biggr)H_{s-1}-H_{r-1}H_{n-\ell}\\
&\qquad -H_{r-1}\biggl(\frac{1}{n-\ell+1}+\cdots+\frac{1}{n-\ell+s-1}\biggr)+H_{r-1}H_{s-1}
\biggr)\,.
\end{align*}
The above sum can be divided into $9$ parts. With the help of Lemmas \ref{binom1},  \ref{binom2}, \ref{binom3} and \ref{binom4}, we have the following $9$ identities:
\begin{align*}
&{\rm Part ~I}\\
&=\sum_{\ell=0}^{n} {\ell}^{p} \binom{\ell+r-1}{r-1}\binom{n-\ell+s-1}{s-1}H_{\ell}H_{n-\ell}\\
&=\sum_{\ell=0}^{n} {\ell}^{p} H_{\ell}H_{n-\ell} \sum_{t=0}^{r+s-2}a(r,s,n,t)\ell^t\\
&=\sum_{t=0}^{r+s-2} a(r,s,n,t) \sum_{\ell=0}^{n} {\ell}^{p+t} H_{\ell}H_{n-\ell}\\
&=\sum_{t=0}^{r+s-2} a(r,s,n,t)\\
&\qquad\times\biggl( A(p+t,n)S_{n+1}-(B(p+t,n)+C(p+t,n))H_{n+1}+D(p+t,n)\biggr)\\
&=\sum_{t=0}^{r+s-2} a(r,s,n,t)\\
&\qquad\times\biggl( A(p+t,n)S_{n}+\biggl(2 A(p+t,n)\frac{1}{n+1}-B(p+t,n)-C(p+t,n)\biggr)H_{n}\\
&\quad +D(p+t,n)-\frac{1}{n+1}(B(p+t,n)-C(p+t,n))\biggr)\\
&=S_{n} \sum_{t=0}^{r+s-2} a(r,s,n,t)A(p+t,n)\\
&\quad +H_{n}\sum_{t=0}^{r+s-2} a(r,s,n,t)\biggl(2 A(p+t,n)\frac{1}{n+1}-B(p+t,n)-C(p+t,n)\biggr)\\
&\quad +\sum_{t=0}^{r+s-2} a(r,s,n,t)\biggl(D(p+t,n)-\frac{1}{n+1}(B(p+t,n)-C(p+t,n))\biggr)\,.
\end{align*}

\begin{align*}
&{\rm Part ~II}\\
&=\sum_{\ell=0}^{n} {\ell}^{p} \binom{\ell+r-1}{r-1}\binom{n-\ell+s-1}{s-1}H_{\ell}\biggl(\frac{1}{n-\ell+1}+\cdots+\frac{1}{n-\ell+s-1}\biggr)\\
&=\sum_{\ell=0}^{n} {\ell}^{p} H_{\ell} \sum_{t=0}^{r+s-3}a_{1}(r,s,n,t)\ell^t\\
&=\sum_{t=0}^{r+s-3} a_{1}(r,s,n,t) \sum_{\ell=0}^{n} {\ell}^{p+t} H_{\ell}\\
&=\sum_{t=0}^{r+s-3} a_{1}(r,s,n,t)(A(p+t,n)H_{n+1}-B(p+t,n))\\
&=H_{n}\sum_{t=0}^{r+s-3} a_{1}(r,s,n,t)A(p+t,n)\\
&\qquad +\sum_{t=0}^{r+s-3} a_{1}(r,s,n,t)\biggl(A(p+t,n)\frac{1}{n+1}-B(p+t,n)\biggr)\,.
\end{align*}

\begin{align*}
&{\rm Part ~III}\\
&=-\sum_{\ell=0}^{n} {\ell}^{p} \binom{\ell+r-1}{r-1}\binom{n-\ell+s-1}{s-1}H_{\ell}H_{s-1}\\
&=-\sum_{\ell=0}^{n} {\ell}^{p} H_{\ell}H_{s-1} \sum_{t=0}^{r+s-2}a(r,s,n,t)\ell^t\\
&=-H_{s-1}\sum_{t=0}^{r+s-2} a(r,s,n,t) \sum_{\ell=0}^{n} {\ell}^{p+t} H_{\ell}\\
&=-H_{n}H_{s-1}\sum_{t=0}^{r+s-2} a(r,s,n,t)A(p+t,n)\\
&\quad -H_{s-1}\sum_{t=0}^{r+s-2} a_{1}(r,s,n,t)\biggl(A(p+t,n)\frac{1}{n+1}-B(p+t,n)\biggr)\,.
\end{align*}

\begin{align*}
&{\rm Part ~IV}\\
&=\sum_{\ell=0}^{n} {\ell}^{p} \binom{\ell+r-1}{r-1}\binom{n-\ell+s-1}{s-1}\biggl(\frac{1}{\ell+1}+\cdots+\frac{1}{\ell+r-1}\biggr)H_{n-\ell}\\
&=\sum_{\ell=0}^{n} {\ell}^{p} H_{n-\ell} \sum_{t=0}^{r+s-3}a_{2}(r,s,n,t)\ell^t\\
&=\sum_{t=0}^{r+s-3} a_{2}(r,s,n,t) \sum_{\ell=0}^{n} {\ell}^{p+t} H_{n-\ell}\\
&=\sum_{t=0}^{r+s-3} a_{2}(r,s,n,t)(A(p+t,n)H_{n+1}-C(p+t,n))\\
&=H_{n}\sum_{t=0}^{r+s-3} a_{2}(r,s,n,t)A(p+t,n)\\
&\quad +\sum_{t=0}^{r+s-3} a_{2}(r,s,n,t)\biggl(A(p+t,n)\frac{1}{n+1}-C(p+t,n)\biggr)\,.
\end{align*}

\begin{align*}
&{\rm Part ~V}\\
&=\sum_{\ell=0}^{n} {\ell}^{p} \binom{\ell+r-1}{r-1}\binom{n-\ell+s-1}{s-1}\\
&\quad \times \biggl(\frac{1}{\ell+1}+\cdots+\frac{1}{\ell+r-1}\biggr)
\biggl(\frac{1}{n-\ell+1}+\cdots+\frac{1}{n-\ell+s-1}\biggr)\\
&=\sum_{\ell=0}^{n} {\ell}^{p} \sum_{t=0}^{r+s-4}a_{3}(r,s,n,t)\ell^t\\
&=\sum_{t=0}^{r+s-4} a_{3}(r,s,n,t) \sum_{\ell=0}^{n} {\ell}^{p+t}\,.
\end{align*}

\begin{align*}
&{\rm Part ~VI}\\
&=-\sum_{\ell=0}^{n} {\ell}^{p} \binom{\ell+r-1}{r-1}\binom{n-\ell+s-1}{s-1}\biggl(\frac{1}{\ell+1}+\cdots+\frac{1}{\ell+r-1}\biggr)H_{s-1}\\
&=-H_{s-1}\sum_{\ell=0}^{n} {\ell}^{p} \sum_{t=0}^{r+s-3}a_{2}(r,s,n,t)\ell^t\\
&=-H_{s-1}\sum_{t=0}^{r+s-3} a_{2}(r,s,n,t) \sum_{\ell=0}^{n} {\ell}^{p+t}\,.
\end{align*}

\begin{align*}
&{\rm Part ~VII}\\
&=-\sum_{\ell=0}^{n} {\ell}^{p} \binom{\ell+r-1}{r-1}\binom{n-\ell+s-1}{s-1}H_{r-1}H_{n-\ell}\\
&=-H_{r-1}\sum_{\ell=0}^{n} {\ell}^{p}H_{n-\ell} \sum_{t=0}^{r+s-2}a(r,s,n,t)\ell^t \notag\\
&=-H_{r-1}\sum_{t=0}^{r+s-2} a(r,s,n,t)(A(p+t,n)H_{n+1}-C(p+t,n))\\
&=-H_{n}H_{r-1}\sum_{t=0}^{r+s-2} a(r,s,n,t)A(p+t,n)\\
&\quad -H_{r-1}\sum_{t=0}^{r+s-2} a(r,s,n,t)\biggl(A(p+t,n)\frac{1}{n+1}-C(p+t,n)\biggr)\,.
\end{align*}

\begin{align*}
&{\rm Part ~VIII}\\
&=-\sum_{\ell=0}^{n} {\ell}^{p} \binom{\ell+r-1}{r-1}\binom{n-\ell+s-1}{s-1}H_{r-1}\biggl(\frac{1}{n-\ell+1}+\cdots+\frac{1}{n-\ell+s-1}\biggr)\\
&=-H_{r-1}\sum_{\ell=0}^{n} {\ell}^{p}\sum_{t=0}^{r+s-3}a_{1}(r,s,n,t)\ell^t\\
&=-H_{r-1}\sum_{t=0}^{r+s-3} a_{1}(r,s,n,t) \sum_{\ell=0}^{n} {\ell}^{p+t}\,.
\end{align*}

\begin{align*}
&{\rm Part ~IX}\\
&=\sum_{\ell=0}^{n} {\ell}^{p} \binom{\ell+r-1}{r-1}\binom{n-\ell+s-1}{s-1}H_{r-1}H_{s-1} \\
&=H_{r-1}H_{s-1}\sum_{\ell=0}^{n} {\ell}^{p}\sum_{t=0}^{r+s-2}a(r,s,n,t)\ell^t\\
&=H_{r-1}H_{s-1}\sum_{t=0}^{r+s-2} a(r,s,n,t) \sum_{\ell=0}^{n} {\ell}^{p+t}\,.
\end{align*}

Combining all these $9$ parts together, we get the desired result.
\end{proof}

\begin{theorem}\label{hypersturc3}
For positive integers $n, p$ and $r$, we have
\begin{align*}
\sum_{\ell=0}^n (\ell)^{(p)}h_{\ell}^{(r)}  h_{n-\ell}^{(s)}=A_{1}(p,r,s,n)S_{n}+B_{1}(p,r,s,n)H_{n}+C_{1}(p,r,s,n)\,,
\end{align*}
where
\begin{align*}
A_{1}(p,r,s,n)&=\sum_{m=0}^p s_{u}(p,m) A(m,r,s,n),\\
B_{1}(p,r,s,n)&=\sum_{m=0}^p s_{u}(p,m) B(m,r,s,n),\\
C_{1}(p,r,s,n)&=\sum_{m=0}^p s_{u}(p,m) C(m,r,s,n).\\
\end{align*}
\end{theorem}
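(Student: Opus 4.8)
The plan is to exploit linearity of the sum in the weight $(\ell)^{(p)}$ and reduce the statement to Theorem \ref{hypersturc2} via the defining expansion of the rising factorial in terms of the unsigned Stirling numbers of the first kind. Since $(\ell)^{(p)}=\sum_{m=0}^{p} s_{u}(p,m)\,\ell^{m}$ by the very definition of $s_{u}$, the rising-factorial weight is a fixed linear combination of the monomial weights $\ell^{m}$ already treated in Theorem \ref{hypersturc2}; no new summation identity is needed.

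Concretely, first I would substitute this expansion into the left-hand side and interchange the two finite sums to obtain
\begin{align*}
\sum_{\ell=0}^{n}(\ell)^{(p)}h_{\ell}^{(r)}h_{n-\ell}^{(s)}
&=\sum_{\ell=0}^{n}\Biggl(\sum_{m=0}^{p}s_{u}(p,m)\ell^{m}\Biggr)h_{\ell}^{(r)}h_{n-\ell}^{(s)}\\
&=\sum_{m=0}^{p}s_{u}(p,m)\sum_{\ell=0}^{n}\ell^{m}h_{\ell}^{(r)}h_{n-\ell}^{(s)}\,.
\end{align*}
Next I would apply Theorem \ref{hypersturc2} to each inner sum, replacing $\sum_{\ell=0}^{n}\ell^{m}h_{\ell}^{(r)}h_{n-\ell}^{(s)}$ by $A(m,r,s,n)S_{n}+B(m,r,s,n)H_{n}+C(m,r,s,n)$. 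Finally, collecting separately the coefficient of $S_{n}$, the coefficient of $H_{n}$, and the constant term yields exactly $A_{1}(p,r,s,n)S_{n}+B_{1}(p,r,s,n)H_{n}+C_{1}(p,r,s,n)$ with the three stated definitions of $A_{1}$, $B_{1}$, $C_{1}$.

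There is essentially no analytic obstacle here; the content is merely a change of basis from the monomial powers to the rising factorial, analogous in spirit to the Spie\ss\ lemma that converts binomial-coefficient weights into power weights via $S(p,\ell)\cdot\ell!$. The one point warranting a brief check is the $m=0$ summand: Theorem \ref{hypersturc2} is stated for $p\ge 1$, whereas the inner index $m$ runs down to $0$. This causes no difficulty, because $s_{u}(p,0)=(-1)^{p}s(p,0)=\delta_{p0}=0$ for $p\ge 1$, so the $m=0$ term drops out of the sum and only the cases $m\ge 1$ covered by Theorem \ref{hypersturc2} actually contribute. Since both summations are finite, the interchange of order is unconditionally valid and no convergence issues arise.
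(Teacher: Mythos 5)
Your proposal is correct and follows essentially the same route as the paper: expand $(\ell)^{(p)}=\sum_{m=0}^{p}s_{u}(p,m)\ell^{m}$, interchange the finite sums, apply Theorem \ref{hypersturc2} termwise, and collect the coefficients of $S_{n}$, $H_{n}$, and the constant term. Your additional observation that the $m=0$ term vanishes (since $s_{u}(p,0)=0$ for $p\ge 1$) is a small point of care that the paper passes over silently, but it changes nothing in substance.
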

\begin{proof}
\begin{align*}
&\quad \sum_{\ell=0}^n (\ell)^{(p)}h_{\ell}^{(r)}  h_{n-\ell}^{(s)}\\
&=\sum_{\ell=0}^n \sum_{m=0}^{p} s_{u}(p,m) {\ell}^{m}h_{\ell}^{(r)}  h_{n-\ell}^{(s)}\notag\\
&=\sum_{m=0}^{p} s_{u}(p,m) \sum_{\ell=0}^n {\ell}^{m}h_{\ell}^{(r)}  h_{n-\ell}^{(s)}\notag\\
&=\sum_{m=0}^{p} s_{u}(p,m) \biggl( A(m,r,s,n)S_{n}+B(m,r,s,n)H_{n}+C(m,r,s,n) \biggr) \notag\\
&=\left( \sum_{m=0}^p s_{u}(p,m) A(m,r,s,n) \right) S_{n}
+\left( \sum_{m=0}^p s_{u}(p,m) B(m,r,s,n) \right) H_{n}\\
&\quad +\left( \sum_{m=0}^p s_{u}(p,m) C(m,r,s,n)) \right)\,.
\end{align*}
\end{proof}

Before going further, we introduce some more notations.
\begin{align*}
&a_{4}(r,r,t):=\frac{1}{(r-1)!^{2}}\sum_{\substack{t_{1}+t_{2}=t\\ 0 \le t_{1},t_{2} \le r-1}} s_{u}(r,t_{1}+1) s_{u}(s,t_{2}+1)\,,\\
&a_{5}(r,r,t):=\frac{1}{(r-1)!^{2}}\sum_{\substack{t_{1}+t_{2}=t\\ 0 \le t_{1}\le r-1\\ 0 \le t_{2} \le r-2}} s_{u}(r,t_{1}+1) (t_{2}+1) s_{u}(s,t_{2}+2)\,.
\end{align*}

\begin{theorem}\label{hypersturc4}
For positive integers $n, p$ and $r$, we have
\begin{align*}
\sum_{\ell=0}^n \ell^{p}(h_{\ell}^{(r)})^{2}=A(2,p,r,n)(H_{n})^{2}+B(2,p,r,n)H_{n}+C(2,p,r,n)\,,
\end{align*}
where
\begin{align*}
&A(2,p,r,n)=\sum_{t=0}^{2r-2} a_{4}(r,r,t)A(p+t,n),\\
&B(2,p,r,n)=-\sum_{t=0}^{2r-2}a_{4}(r,r,t)F(p+t,n)-2H_{r-1}\sum_{t=0}^{2r-2} a_{4}(r,r,t)A(p+t,n)\\
&\qquad \qquad \quad +2\sum_{t=0}^{2r-3} a_{5}(r,r,t)A(p+t,n),\\
&C(2,p,r,n)=\sum_{t=0}^{2r-2} a_{4}(r,r,t)G(p+t,n)\\
&\qquad \qquad \quad +\sum_{\ell=0}^{n} {\ell}^{p} \binom{\ell+r-1}{r-1}^{2}\biggl(\frac{1}{\ell+1}+\cdots+\frac{1}{\ell+r-1}-H_{r-1}\biggr)^{2}\\
&\qquad \qquad \quad -2H_{r-1}\sum_{t=0}^{2r-2} a_{4}(r,r,t))\biggl(A(p+t,n)\frac{1}{n+1}-B(p+t,n)\biggr)\\
&\qquad \qquad \quad +2\sum_{t=0}^{2r-3} a_{5}(r,r,t)\biggl(A(p+t,n)\frac{1}{n+1}-B(p+t,n)\biggr).\\
\end{align*}
\end{theorem}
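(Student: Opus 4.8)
The plan is to mirror the proof of Theorem~\ref{hypersturc2}, replacing the convolution $h_\ell^{(r)}h_{n-\ell}^{(s)}$ by the single squared factor $(h_\ell^{(r)})^2$ and, correspondingly, replacing Spie\ss's convolution identity by his evaluation of $\sum_\ell\ell^k(H_\ell)^2$. First I would apply (\ref{h01}) to write $(h_\ell^{(r)})^2=\binom{\ell+r-1}{r-1}^2(H_{\ell+r-1}-H_{r-1})^2$ and split the harmonic difference as $H_{\ell+r-1}-H_{r-1}=H_\ell+Q_\ell-H_{r-1}$, where $Q_\ell:=\frac{1}{\ell+1}+\cdots+\frac{1}{\ell+r-1}$. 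Expanding the perfect square as
\[
(H_\ell+Q_\ell-H_{r-1})^2=(H_\ell)^2+2H_\ell(Q_\ell-H_{r-1})+(Q_\ell-H_{r-1})^2
\]
then breaks the target sum into three groups.

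The computational heart, playing the role that Lemmas~\ref{binom1}--\ref{binom4} played before, is two polynomial expansions of the squared binomial. The Cauchy product of (\ref{Kamano1}) with itself yields
\[
\binom{\ell+r-1}{r-1}^2=\sum_{t=0}^{2r-2}a_4(r,r,t)\ell^t,
\]
and the product of (\ref{Kamano1}) with (\ref{Kamano2}) (with $n$ replaced by $\ell$) yields
\[
\binom{\ell+r-1}{r-1}^2 Q_\ell=\sum_{t=0}^{2r-3}a_5(r,r,t)\ell^t,
\]
with $a_4,a_5$ the coefficients defined just before the theorem; both follow by collecting the terms with fixed $t_1+t_2=t$, exactly as in Lemma~\ref{binom1}. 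Using these, the first group becomes $\sum_{t}a_4(r,r,t)\sum_\ell\ell^{p+t}(H_\ell)^2$; the middle group splits into the part $2\sum_{t}a_5(r,r,t)\sum_\ell\ell^{p+t}H_\ell$ (from $2H_\ell Q_\ell$, using the $a_5$-expansion) and the part $-2H_{r-1}\sum_{t}a_4(r,r,t)\sum_\ell\ell^{p+t}H_\ell$ (from $-2H_{r-1}H_\ell$, using the $a_4$-expansion); and the last group is the genuinely non-reducible piece $\sum_\ell\ell^p\binom{\ell+r-1}{r-1}^2(Q_\ell-H_{r-1})^2$.

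Next I would substitute Spie\ss's evaluations $\sum_\ell\ell^k(H_\ell)^2=A(k,n)(H_n)^2-F(k,n)H_n+G(k,n)$ into the first group and $\sum_\ell\ell^k H_\ell=A(k,n)H_{n+1}-B(k,n)$ into the two sub-parts of the middle group, normalizing everywhere through $H_{n+1}=H_n+\frac{1}{n+1}$; this last step is exactly what produces the recurring combination $A(p+t,n)\frac{1}{n+1}-B(p+t,n)$ appearing in $B(2,p,r,n)$ and $C(2,p,r,n)$. Collecting the coefficient of $(H_n)^2$ gives $A(2,p,r,n)$, the coefficient of $H_n$ gives $B(2,p,r,n)$ (with its three summands coming respectively from the $F$-term of the first group, from the $a_4$-part, and from the $a_5$-part of the middle group), and the remaining terms assemble into $C(2,p,r,n)$.

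I expect the main obstacle to be organizational rather than conceptual: one must resist linearizing $Q_\ell^2$ and instead carry the last group forward verbatim as $\sum_\ell\ell^p\binom{\ell+r-1}{r-1}^2(Q_\ell-H_{r-1})^2$, since it is deliberately left unevaluated in the statement, and one must track every $\frac{1}{n+1}$ shift coming from $H_{n+1}=H_n+\frac{1}{n+1}$ so that the $H_n$-coefficient and the constant term reproduce the asserted formulas exactly. A useful consistency check is the case $r=1$, where $h_\ell^{(1)}=H_\ell$, the sum $Q_\ell$ and $H_{r-1}$ vanish, the $a_5$-range is empty, $a_4(1,1,0)=1$, and the identity collapses precisely to Spie\ss's evaluation of $\sum_\ell\ell^p(H_\ell)^2$.
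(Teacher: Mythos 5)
Your proposal is correct and follows essentially the same route as the paper's own proof: the same expansion via (\ref{h01}), the same four-way split (with $(Q_\ell-H_{r-1})^2$ carried verbatim as the paper's Part II), the same $a_4$/$a_5$ Cauchy-product expansions, and the same substitution of Spie\ss's evaluations with the $H_{n+1}=H_n+\frac{1}{n+1}$ normalization. Your $r=1$ sanity check is a nice addition, and you even silently correct two typos in the paper's proof (its Part III sum bound $r+s-2$ and the stray $H_{n-\ell}$ in its Part IV, both of which should involve $2r-2$ and $H_\ell$ respectively).
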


\begin{proof}
With the help of (\ref{h01}), we have
\begin{align*}
&\sum_{\ell=0}^{n} \ell^{p}(h_{\ell}^{(r)})^{2}\\
&=\sum_{\ell=0}^{n} {\ell}^{p} \binom{\ell+r-1}{r-1}^{2}(H_{\ell+r-1}-H_{r-1})^{2}\\
&=\sum_{\ell=0}^{n} {\ell}^{p} \binom{\ell+r-1}{r-1}^{2}\biggl(H_{\ell}^{2}+H_{r-1}^{2}-2H_{r-1}H_{\ell}+\biggl(\frac{1}{\ell+1}+\cdots+\frac{1}{\ell+r-1}\biggr)^{2}\\
&\quad +2H_{\ell}\biggl(\frac{1}{\ell+1}+\cdots+\frac{1}{\ell+r-1}\biggr)
-2H_{r-1}\biggl(\frac{1}{\ell+1}+\cdots+\frac{1}{\ell+r-1}\biggr)\biggr)\,.
\end{align*}
The above sum can be divided into $4$ parts.
\begin{align*}
&{\rm Part ~I}\\
&=\sum_{\ell=0}^{n} {\ell}^{p} \binom{\ell+r-1}{r-1}^{2}H_{\ell}^{2}\\
&=\sum_{\ell=0}^{n} {\ell}^{p} H_{\ell}^{2} \biggl(\frac{1}{(r-1)!}\sum_{k_{1}=1}^{r} s_{u}(r,k_{1}) \ell^{k_{1}-1}\biggr)^{2}\\
&=\sum_{\ell=0}^{n} {\ell}^{p} H_{\ell}^{2}\frac{1}{(r-1)!^{2}}\sum_{t=0}^{2r-2}\ell^t \sum_{\substack{t_{1}+t_{2}=t\\ 0 \le t_{1},t_{2} \le r-1}} s_{u}(r,t_{1}+1) s_{u}(r,t_{2}+1)\\
&=\sum_{t=0}^{2r-2} a_{4}(r,r,t) \sum_{\ell=0}^{n} {\ell}^{p+t} H_{\ell}^{2}\\
&=\sum_{t=0}^{2r-2} a_{4}(r,r,t)\biggl(A(p+t,n)(H_{n})^{2}-F(p+t,n)H_{n}+G(p+t,n)\biggr)\\
&=(H_{n})^{2}\sum_{t=0}^{2r-2} a_{4}(r,r,t)A(p+t,n)-H_{n}\sum_{t=0}^{2r-2}a_{4}(r,r,t)F(p+t,n)\\
&\quad +\sum_{t=0}^{2r-2} a_{4}(r,r,t)G(p+t,n)\,.
\end{align*}

$$
{\rm Part ~II}=\sum_{\ell=0}^{n} {\ell}^{p} \binom{\ell+r-1}{r-1}^{2}\biggl(\frac{1}{\ell+1}+\cdots+\frac{1}{\ell+r-1}-H_{r-1}\biggr)^{2} \,.
$$

\begin{align*}
&{\rm Part ~III}\\
&=-2\sum_{\ell=0}^{n} {\ell}^{p} \binom{\ell+r-1}{r-1}^{2}H_{\ell}H_{r-1}\\
&=-2H_{r-1}\sum_{\ell=0}^{n} {\ell}^{p} H_{\ell}\sum_{t=0}^{r+s-2}a_{4}(r,r,t)\ell^t\\
&=-2H_{r-1}\sum_{t=0}^{2r-2} a_{4}(r,r,t) \sum_{\ell=0}^{n} {\ell}^{p+t} H_{\ell}\\
&=-2H_{n}H_{r-1}\sum_{t=0}^{2r-2} a_{4}(r,r,t)A(p+t,n)\\
&\quad -2H_{r-1}\sum_{t=0}^{2r-2} a_{4}(r,r,t)\biggl(A(p+t,n)\frac{1}{n+1}-B(p+t,n)\biggr)\,.
\end{align*}

\begin{align*}
&{\rm Part ~IV}\\
&=2\sum_{\ell=0}^{n} {\ell}^{p} \binom{\ell+r-1}{r-1}^{2}H_{\ell}\biggl(\frac{1}{\ell+1}+\cdots+\frac{1}{\ell+r-1}\biggr)\\
&=2\sum_{\ell=0}^{n} {\ell}^{p} H_{\ell} \sum_{t=0}^{2r-3}a_{5}(r,r,t)\ell^t\\
&=2\sum_{t=0}^{2r-3} a_{5}(r,r,t) \sum_{\ell=0}^{n} {\ell}^{p+t} H_{n-\ell}\\
&=2\sum_{t=0}^{2r-3} a_{5}(r,r,t)(A(p+t,n)H_{n+1}-B(p+t,n))\\
&=2H_{n}\sum_{t=0}^{2r-3} a_{5}(r,r,t)A(p+t,n)\\
&\quad +2\sum_{t=0}^{2r-3} a_{5}(r,r,t)\biggl(A(p+t,n)\frac{1}{n+1}-B(p+t,n)\biggr)\,.
\end{align*}
Combining all these $4$ parts together, we get the desired result.
\end{proof}

\begin{theorem}\label{hypersturc5}
For positive integers $n, p$ and $r$, we have
\begin{align*}
\sum_{\ell=0}^n (\ell)^{(p)}(h_{\ell}^{(r)})^{2}=A_{2}(p,r,n)(H_{n})^{2}+B_{2}(p,r,n)H_{n}+C_{2}(p,r,n)\,,
\end{align*}
where
\begin{align*}
A_{2}(p,r,n)&=\sum_{m=0}^p s_{u}(p,m) A(2,m,r,n),\\
B_{2}(p,r,n)&=\sum_{m=0}^p s_{u}(p,m) B(2,m,r,n),\\
C_{2}(p,r,n)&=\sum_{m=0}^p s_{u}(p,m) C(2,m,r,n).\\
\end{align*}
\end{theorem}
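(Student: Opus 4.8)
The plan is to mirror verbatim the argument used for Theorem~\ref{hypersturc3}: the rising-factorial weighted sum is reduced to the power-weighted sums already handled by Theorem~\ref{hypersturc4}, using only the defining expansion of the unsigned Stirling numbers of the first kind together with linearity of summation.

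First I would invoke the definition $(\ell)^{(p)}=\sum_{m=0}^{p}s_{u}(p,m)\,\ell^{m}$ to rewrite the weight, giving
\[
\sum_{\ell=0}^n (\ell)^{(p)}(h_{\ell}^{(r)})^{2}
=\sum_{\ell=0}^n\sum_{m=0}^{p}s_{u}(p,m)\,\ell^{m}(h_{\ell}^{(r)})^{2}
=\sum_{m=0}^{p}s_{u}(p,m)\sum_{\ell=0}^n\ell^{m}(h_{\ell}^{(r)})^{2}\,,
\]
where the interchange of the two finite sums is harmless. Next I would apply Theorem~\ref{hypersturc4} to each inner sum, substituting
\[
\sum_{\ell=0}^n\ell^{m}(h_{\ell}^{(r)})^{2}=A(2,m,r,n)(H_{n})^{2}+B(2,m,r,n)H_{n}+C(2,m,r,n)\,.
\]
Collecting the coefficients of $(H_n)^2$, of $H_n$, and of the constant term then reads off exactly the quantities $A_{2}(p,r,n)$, $B_{2}(p,r,n)$ and $C_{2}(p,r,n)$ as defined in the statement.

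One small point worth flagging is the range of validity of Theorem~\ref{hypersturc4}, which is stated for positive integers; the term $m=0$ would otherwise need separate attention. However, since $(x)^{(p)}=x(x+1)\cdots(x+p-1)$ has vanishing constant term for $p\ge 1$, we have $s_{u}(p,0)=0$, so the $m=0$ summand drops out and every surviving term has $m\ge 1$, for which Theorem~\ref{hypersturc4} applies directly. Beyond this bookkeeping there is no real obstacle: the identity is a purely linear consequence of the previous theorem, and the only care required is to keep the index ranges and the Stirling coefficients aligned when distributing the sum over the three structural pieces $(H_n)^2$, $H_n$, and the constant.
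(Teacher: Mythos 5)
Your proof is correct and follows exactly the paper's own argument: expand $(\ell)^{(p)}=\sum_{m=0}^{p}s_{u}(p,m)\ell^{m}$, interchange the finite sums, apply Theorem~\ref{hypersturc4} to each inner sum, and collect the coefficients of $(H_n)^2$, $H_n$, and the constant term. Your remark that $s_{u}(p,0)=0$ for $p\ge 1$ (so the $m=0$ term vanishes) is a careful bookkeeping point the paper silently passes over, but it changes nothing in the substance.
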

\begin{proof}
\begin{align*}
&\sum_{\ell=0}^n (\ell)^{(p)}(h_{\ell}^{(r)})^{2}\\
&=\sum_{\ell=0}^n \sum_{m=0}^{p} s_{u}(p,m) {\ell}^{m}(h_{\ell}^{(r)})^{2}\notag\\
&=\sum_{m=0}^{p} s_{u}(p,m) \sum_{\ell=0}^n {\ell}^{m}(h_{\ell}^{(r)})^{2}\notag\\
&=\sum_{m=0}^{p} s_{u}(p,m) \biggl(A(2,m,r,n)(H_{n})^{2}+B(2,m,r,n)H_{n}+C(2,m,r,n)\biggr) \notag\\
&=\left( \sum_{m=0}^p s_{u}(p,m) A(2,m,r,n) \right) S_{n}
+\left( \sum_{m=0}^p s_{u}(p,m) B(2,m,r,n) \right) H_{n}\\
&\quad +\left( \sum_{m=0}^p s_{u}(p,m) C(2,m,r,n)) \right)\,.
\end{align*}
\end{proof}

\section{Summations involving generalized hyperharmonic numbers}

In this section, we consider some summations involving generalized hyperharmonic numbers.  We give expressions of the summations
$$
\sum_{\ell=0}^n \ell^{p}H_{\ell}^{(q,r)},\quad
\sum_{\ell=0}^n \ell^{p}H_{n-\ell}^{(r)},\quad
\sum_{\ell=0}^n \ell^{p}H_{n-\ell}^{(q,r)}\,.
$$

\begin{Lem}[\cite{Rusen}]\label{lemma3}
For $r, n, p \in \mathbb N$, we have
\begin{align*}
H_n^{(p,r)}=\sum_{m=0}^{r-1} \sum_{j=0}^{r-1-m} \hat{a}(r,m,j) n^{j} H_n^{(p-m)}\,.
\end{align*}
The coefficients $\hat{a}(r,m,j)$ satisfy the following recurrence relations:
\begin{align*}
&\hat{a}(r+1,r,0)=-\sum_{m=0}^{r-1} \hat{a}(r,m,r-m-1)\frac{1}{r-m}\,,\\
&\hat{a}(r+1,m,\ell)=\sum_{j=\ell-1}^{r-1-m} \frac{\hat{a}(r,m,j)}{j+1} \binom{j+1}{j-\ell+1}B_{j-\ell+1}^{+}\\
&\qquad\qquad(0\leq m \leq r-1, 1\leq \ell \leq r-m)\,,\\
&\hat{a}(r+1,m,0)=-\sum_{y=0}^{m} \sum_{j=max\{0, m-y-1\}}^{r-1-y}\hat{a}(r,y,j)D(r,m,j,y)\quad (0\leq m \leq r-1)\,,
\end{align*}
where
$$
D(r,m,j,y)=\sum_{\ell=max\{0, m-y-1\}}^{j} \frac{1}{j+1} \binom{j+1}{j-\ell}B_{j-\ell}^{+}\binom{\ell+1}{m-y}(-1)^{1+\ell-m+y}\,.
$$
The initial value is given by $\hat{a}(1,0,0)=1$.
\end{Lem}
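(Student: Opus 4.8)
The plan is to prove the representation by induction on $r$, keeping $p$ as a fixed parameter and reading the generalized harmonic numbers $H_n^{(s)}$ for arbitrary integer order $s$, where for $s\le 0$ one interprets $H_n^{(s)}=\sum_{i=1}^n i^{-s}$ as a polynomial in $n$ (so $H_n^{(0)}=n$, $H_n^{(-1)}=n(n+1)/2$, etc.). The base case $r=1$ is immediate: the double sum collapses to the single term $\hat a(1,0,0)\,n^{0}H_n^{(p)}=H_n^{(p)}=H_n^{(p,1)}$, which fixes $\hat a(1,0,0)=1$. For the inductive step I would use the defining relation $H_n^{(p,r+1)}=\sum_{k=1}^n H_k^{(p,r)}$ together with the inductive hypothesis, which reduces everything to evaluating, for each pair $(m,j)$ present in the level-$r$ expansion, the weighted sum $\sum_{k=1}^n k^j H_k^{(p-m)}$.

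The engine of the argument is therefore the auxiliary identity
\begin{align*}
\sum_{k=1}^n k^j H_k^{(q)}=\Bigl(\sum_{k=1}^n k^j\Bigr)H_n^{(q)}-\sum_{i=1}^n\frac{1}{i^{q}}\sum_{k=1}^{i-1}k^j\,,
\end{align*}
obtained by writing $H_k^{(q)}=\sum_{i=1}^k i^{-q}$ and interchanging the order of summation. Into this I would substitute Faulhaber's formula $\sum_{k=1}^m k^j=\frac{1}{j+1}\sum_{i=0}^{j}\binom{j+1}{i}B_i^{+}m^{j+1-i}$ for the outer sum, and its companion $\sum_{k=1}^{i-1}k^j=\frac{1}{j+1}\sum_{\ell=0}^{j}\binom{j+1}{\ell}(-1)^{\ell}B_\ell^{+}\,i^{j+1-\ell}$ (the classical $B_1=-\tfrac12$ convention, $B_\ell=(-1)^\ell B_\ell^{+}$) for the inner polynomial. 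The remaining sum over $i$ then collapses through $\sum_{i=1}^n i^{a}=H_n^{(-a)}$ into a linear combination of the lower-order objects $H_n^{(q-j-1+\ell)}$.

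This exhibits two distinct channels. The first, $(\sum_{k=1}^n k^j)H_n^{(q)}$ with $q=p-m$, keeps the harmonic order fixed and multiplies it by the powers $n^{1},\dots,n^{j+1}$ from Faulhaber; extracting the coefficient of $n^{\ell}H_n^{(p-m)}$ for $\ell\ge 1$ (i.e.\ setting $j+1-i=\ell$, using $\binom{j+1}{j+1-\ell}=\binom{j+1}{\ell}=\binom{j+1}{j-\ell+1}$) reproduces exactly the stated recurrence for $\hat a(r+1,m,\ell)$, with factor $\frac{1}{j+1}\binom{j+1}{j-\ell+1}B_{j-\ell+1}^{+}$ and summation range $\ell-1\le j\le r-1-m$. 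The second channel lowers the order and carries no extra power of $n$, so it feeds only the constant-power coefficients $\hat a(r+1,m,0)$; the extreme order drop ($\ell=0$, $B_0^{+}=1$, $j=r-1-m$) isolates the top coefficient and yields $\hat a(r+1,r,0)=-\sum_{m=0}^{r-1}\hat a(r,m,r-m-1)\frac{1}{r-m}$, matching the lemma.

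The main obstacle is the exact matching of the order-lowering channel with the stated double-sum recurrence for $\hat a(r+1,m,0)$ and its kernel $D(r,m,j,y)$. The interchange route produces a \emph{single} Bernoulli term of the form $\frac{1}{j+1}\binom{j+1}{\ell}(-1)^{\ell}B_\ell^{+}$ assigned to the new order index $m'=m+j+1-\ell$, whereas the lemma writes the same coefficient as a \emph{sum} over $\ell$ carrying the binomial factor $\binom{\ell+1}{m-y}(-1)^{1+\ell-m+y}$. Reconciling the two requires a re-indexing of the Bernoulli sum together with a binomial--Bernoulli identity that rewrites one term as the displayed sum. One must also verify that the admissible ranges are respected throughout: the new order index $m'=m+j+1-\ell$ stays in $[0,r]$ and the power index never exceeds $r-m'$, so that the support of the level-$(r+1)$ coefficients is exactly $0\le j\le r-m$ as claimed and no spurious terms survive. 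Finally, since for $p\ge r$ every order $p-m$ occurring on the right is positive and the functions $n^{j}H_n^{(s)}$ are then linearly independent in $n$, the coefficients are uniquely determined and it suffices to verify the recurrences on this range; the remaining cases $p<r$ follow from the same polynomial identities in $n$.
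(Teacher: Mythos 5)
The paper itself offers no proof of this lemma --- it is quoted from the reference \cite{Rusen} --- so your attempt must stand on its own; it does not, for two reasons, both located in your order-lowering channel. (Your channel I and the top coefficient $\hat a(r+1,r,0)$ are handled correctly and do match the stated recurrences.) The error: your companion Faulhaber formula
\[
\sum_{k=1}^{i-1}k^{j}=\frac{1}{j+1}\sum_{\ell=0}^{j}\binom{j+1}{\ell}(-1)^{\ell}B_{\ell}^{+}\,i^{j+1-\ell}
\]
is false for $j=0$ (the left side is $i-1$, the right side is $i$), and $j=0$ occurs in the level-$r$ expansion for every $r$. Consequently your claim that the second channel feeds only strictly lower orders, so that only $y\le m-1$ contributes to $\hat a(r+1,m,0)$, is wrong: the $j=0$ terms also return a contribution $+\hat a(r,y,0)$ at the \emph{unshifted} order $y$. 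Concretely, at $r=1$ your recurrence makes $\hat a(2,0,0)$ an empty sum, i.e.\ $0$, whereas $\sum_{k=1}^{n}H_k^{(p)}=(n+1)H_n^{(p)}-H_n^{(p-1)}$ forces $\hat a(2,0,0)=1$. The term you lose is exactly the $y=m$ part of the stated recurrence (the part whose presence you found puzzling): one has
\[
D(r,m,j,m)=\frac{(-1)^{1+j}}{j+1}\sum_{u=0}^{j}\binom{j+1}{u}(-1)^{u}B_{u}^{+}=-\delta_{j0}
\]
by the defining recurrence of the Bernoulli numbers, so the lemma's $y=m$ terms collapse to $+\hat a(r,m,0)$, precisely the diagonal contribution your computation drops.

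Even after repairing this (treat $j=0$ separately, which restores the term $\hat a(r,m,0)$), your route yields a recurrence for $\hat a(r+1,m,0)$ with a \emph{single} Bernoulli term per pair $(y,j)$, not the stated double sum with kernel $D$, and the reconciliation you defer as ``a re-indexing together with a binomial--Bernoulli identity'' is the actual content of the lemma, not bookkeeping. The kernel $D$ arises naturally only from the other way of evaluating the middle sum: Abel summation gives $\sum_{k=1}^{n}k^{j}H_k^{(q)}=P_j(n)H_n^{(q)}-\sum_{k=1}^{n-1}P_j(k)/(k+1)^{q}$ with $P_j(k)=\sum_{a=1}^{k}a^{j}$; writing $P_j(k)=\frac{1}{j+1}\sum_{\ell=0}^{j}\binom{j+1}{j-\ell}B^{+}_{j-\ell}k^{\ell+1}$ and re-expanding $k^{\ell+1}=\bigl((k+1)-1\bigr)^{\ell+1}$ produces exactly the factors $\binom{\ell+1}{m-y}(-1)^{1+\ell-m+y}$, and then $\sum_{k=1}^{n-1}(k+1)^{u-q}=H_n^{(q-u)}-1$ delivers the stated recurrence, $y=m$ terms included. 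To finish along your lines you must either carry out this computation, or prove that your corrected single-term recurrence generates the same numbers, which requires the translation identity $\sum_{u=0}^{j+1}\binom{j+1}{u}B_{u}^{+}x^{j+1-u}=B_{j+1}(x+1)$ for Bernoulli polynomials together with the $\delta_{j0}$ evaluation above. Your appeal to uniqueness of the expansion cannot substitute for this step: uniqueness identifies your coefficients with the lemma's only if one already knows that the lemma's recurrences produce a valid expansion of $H_n^{(p,r)}$, which is exactly what is to be proved.
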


\begin{theorem}\label{hypersturc6}
For positive integers $n, p, q$ and $r$ with $q \geq r$, we have
\begin{align*}
\sum_{\ell=0}^n \ell^{p}H_{\ell}^{(q,r)}=\sum_{y=0}^{p+r} b(p,r,n,y) H_{n}^{(q-y)} \,,
\end{align*}
where
\begin{align*}
&b(p,r,n,0)=\sum_{j=0}^{r-1} \hat{a}(r,0,j) A(p+j,n)\,,\notag\\
&b(p,r,n,y)=\sum_{j=0}^{r-1-y} \hat{a}(r,y,j) A(p+j,n)\\
&\qquad \qquad \quad -\sum_{\substack{m+t=y\\ 0 \le m\le r-1\\ 1 \le t \le p+r-m}} \sum_{j=max\{0,t-p-1\}}^{r-1-m} \hat{a}(r,m,j) H(p+j,t)\quad (1 \le y \le r-1)\,,\notag\\
&b(p,r,n,y)=-\sum_{\substack{m+t=y\\ 0 \le m\le r-1\\ 1 \le t \le p+r-m}} \sum_{j=max\{0,t-p-1\}}^{r-1-m} \hat{a}(r,m,j) H(p+j,t)\quad (r \le y \le p+r)\,.\notag\\
\end{align*}
\end{theorem}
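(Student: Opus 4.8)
The plan is to collapse the double-index object $H_\ell^{(q,r)}$ down to ordinary generalized harmonic numbers via Lemma~\ref{lemma3}, and then to feed the resulting inner sums into the Spie\ss-type evaluation of $\sum_{\ell=0}^n\ell^k H_\ell^{(s)}$ recorded in Section~2. First I would apply Lemma~\ref{lemma3} with running index $\ell$ and interchange the (finite) sums:
\begin{align*}
\sum_{\ell=0}^n \ell^p H_\ell^{(q,r)}
&=\sum_{\ell=0}^n \ell^p \sum_{m=0}^{r-1}\sum_{j=0}^{r-1-m}\hat{a}(r,m,j)\,\ell^{j}H_\ell^{(q-m)}\\
&=\sum_{m=0}^{r-1}\sum_{j=0}^{r-1-m}\hat{a}(r,m,j)\sum_{\ell=0}^n \ell^{p+j}H_\ell^{(q-m)}\,.
\end{align*}
Into the innermost sum I would then substitute the identity $\sum_{\ell=0}^{n}\ell^{k}H_{\ell}^{(s)}=A(k,n)H_{n}^{(s)}-\sum_{t=1}^{k+1}H_{n}^{(s-t)}H(k,t)$ with $k=p+j$ and $s=q-m$. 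This separates the whole expression into a \emph{diagonal} piece carrying $A(p+j,n)H_n^{(q-m)}$ and a \emph{correction} piece carrying $-\,\hat{a}(r,m,j)H(p+j,t)H_n^{(q-m-t)}$ summed over $1\le t\le p+j+1$.

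The heart of the argument is then purely a regrouping by the superscript of the surviving generalized harmonic number, which I would write as $q-y$. The diagonal piece contributes only when $y=m$, giving $\sum_{j=0}^{r-1-y}\hat{a}(r,y,j)A(p+j,n)$ and thus living in the range $0\le y\le r-1$. The correction piece contributes when $y=m+t$; holding $y$ fixed and summing over the admissible $(m,t,j)$ gives exactly the double sum in the statement. The delicate bookkeeping is in the index limits: from $1\le t\le p+j+1$ together with $0\le j\le r-1-m$ one reads off the ceiling $t\le p+r-m$, while inverting $t\le p+j+1$ produces the lower limit $j\ge\max\{0,t-p-1\}$. Collecting these three regimes $y=0$, $1\le y\le r-1$, and $r\le y\le p+r$ reproduces the three cases of $b(p,r,n,y)$; in particular the last regime receives no diagonal contribution because there $y>r-1$, and $y=0$ receives no correction because $t\ge 1$ forces $y=m+t\ge 1$.

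I expect the only genuine obstacle to be this final regrouping, specifically justifying the interchange of the $t$- and $j$-summations and pinning down the limits $\max\{0,t-p-1\}\le j\le r-1-m$ and $1\le t\le p+r-m$ so that no term is dropped or double-counted; one checks that whenever $t>p+r-m$ the lower limit already exceeds the upper limit, so the constraint is self-enforcing and may be stated for clarity without loss. Everything else is a direct substitution of two previously established identities. I would also remark that the hypothesis $q\ge r$ ensures each leading superscript $q-m$ with $0\le m\le r-1$ satisfies $q-m\ge 1$, so the generalized harmonic numbers appearing in the Lemma~\ref{lemma3} expansion are the intended positive-order objects, while the correction terms $H_n^{(q-m-t)}$ of lower (possibly nonpositive) order are handled uniformly by the Spie\ss\ formula.
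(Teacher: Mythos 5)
Your proposal is correct and follows essentially the same route as the paper's own proof: expand $H_\ell^{(q,r)}$ via Lemma~\ref{lemma3}, interchange the finite sums, substitute the Spie\ss\ identity $\sum_{\ell=0}^n\ell^k H_\ell^{(s)}=A(k,n)H_n^{(s)}-\sum_{t=1}^{k+1}H_n^{(s-t)}H(k,t)$ with $k=p+j$, $s=q-m$, and regroup by the resulting superscript $q-y$ with the index bookkeeping $t\le p+r-m$ and $j\ge\max\{0,t-p-1\}$. Your additional remarks on the empty-sum self-enforcement of the $t$-range and on the role of the hypothesis $q\ge r$ are sound and slightly more explicit than the paper's presentation.
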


\begin{proof}
With the help of Lemma \ref{lemma3}, we obtain
\begin{align*}
&\sum_{\ell=0}^n \ell^{p}H_{\ell}^{(q,r)}\\
&=\sum_{\ell=0}^n \ell^{p} \sum_{m=0}^{r-1} \sum_{j=0}^{r-1-m} \hat{a}(r,m,j) \ell^{j} H_{\ell}^{(q-m)}\notag\\
&=\sum_{m=0}^{r-1} \sum_{j=0}^{r-1-m} \hat{a}(r,m,j) \sum_{\ell=0}^n \ell^{p+j}H_{\ell}^{(q-m)}\notag\\
&=\sum_{m=0}^{r-1} \sum_{j=0}^{r-1-m} \hat{a}(r,m,j) \biggl(A(p+j,n) H_{n}^{(q-m)}-\sum_{t=1}^{p+j+1} H_{n}^{(q-m-t)} H(p+j,t)\biggr) \notag\\
&=\sum_{m=0}^{r-1} H_{n}^{(q-m)} \sum_{j=0}^{r-1-m} \hat{a}(r,m,j) A(p+j,n)\\
&\quad -\sum_{m=0}^{r-1} \sum_{t=1}^{p+r-m} H_{n}^{(q-m-t)} \sum_{j=max\{0,t-p-1\}}^{r-1-m}\hat{a}(r,m,j) H(p+j,t) \notag\\
&=\sum_{m=0}^{r-1} H_{n}^{(q-m)} \sum_{j=0}^{r-1-m} \hat{a}(r,m,j) A(p+j,n)\\
&\quad -\sum_{x=1}^{p+r}  H_{n}^{(q-x)}\sum_{\substack{m+t=x\\ 0 \le m\le r-1\\ 1 \le t \le p+r-m}} \sum_{j=max\{0,t-p-1\}}^{r-1-m} \hat{a}(r,m,j) H(p+j,t)\,.
\end{align*}
\end{proof}


\begin{Lem}\label{binoma6}
For $r, p, n \in \mathbb N$, we have
\begin{align*}
\sum_{k=1}^{n} \binom{k}{p} H_{n-k}^{(r)}=\sum_{t=0}^{p+1}a_{6}(p,n,t)H_{n-1}^{(r-t)}\,,
\end{align*}
where
\begin{align*}
&\quad a_{6}(p,n,t)=\frac{1}{(p+1)!}\sum_{i=t}^{p+1} s(p+1,i) \binom{i}{t} (-1)^{t} (n+1)^{i-t}\,.
\end{align*}
\end{Lem}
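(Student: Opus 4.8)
The plan is to reduce the claim to an exchange of summation order followed by the hockey-stick identity and the defining expansion of the falling factorial into Stirling numbers of the first kind. First I would write $H_{n-k}^{(r)}=\sum_{j=1}^{n-k}j^{-r}$ and substitute this into the left-hand side, so that $\sum_{k=1}^{n}\binom{k}{p}H_{n-k}^{(r)}$ becomes a double sum over the pairs $(j,k)$ with $j,k\ge 1$ and $j+k\le n$ (the boundary term $k=n$ contributes nothing because $H_{0}^{(r)}=0$). Exchanging the order of summation, I would sum over $k$ from $1$ to $n-j$ for each fixed $j$ from $1$ to $n-1$, obtaining $\sum_{j=1}^{n-1}j^{-r}\sum_{k=1}^{n-j}\binom{k}{p}$.

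Next I would apply the hockey-stick identity $\sum_{k=1}^{m}\binom{k}{p}=\binom{m+1}{p+1}$ (valid for $p\in\mathbb N$ since then $\binom{0}{p}=0$) with $m=n-j$, which collapses the inner sum to $\binom{n-j+1}{p+1}$. The key step is then to expand this binomial coefficient as a polynomial in $j$. Writing $\binom{n-j+1}{p+1}=\frac{1}{(p+1)!}(n+1-j)_{(p+1)}$ and using the defining relation $(x)_{(p+1)}=\sum_{i=0}^{p+1}s(p+1,i)x^{i}$ with $x=n+1-j$, I would expand each power $(n+1-j)^{i}$ by the binomial theorem as $\sum_{t=0}^{i}\binom{i}{t}(-1)^{t}(n+1)^{i-t}j^{t}$.

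After swapping the resulting finite double sum over $i$ and $t$ (with $t$ running from $0$ to $p+1$ and $i$ from $t$ to $p+1$), I would collect the coefficient of $j^{t}$ and recognize it as exactly $a_{6}(p,n,t)$, so that $\binom{n-j+1}{p+1}=\sum_{t=0}^{p+1}a_{6}(p,n,t)j^{t}$. Substituting back and interchanging the finite $t$-sum with the $j$-sum yields $\sum_{t=0}^{p+1}a_{6}(p,n,t)\sum_{j=1}^{n-1}j^{t-r}$, and since $\sum_{j=1}^{n-1}j^{t-r}=\sum_{j=1}^{n-1}j^{-(r-t)}=H_{n-1}^{(r-t)}$, the desired formula follows.

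The computation is essentially routine once the order of summation has been exchanged; the only points requiring care are the bookkeeping of the index ranges in the double expansion (ensuring the $i$-sum starts at $t$ after the swap, as recorded in the lower limit of the sum defining $a_{6}$) and the verification that the boundary contributions (the term $k=n$ and the empty tail of the $j$-sum) vanish, so that the upper limit $n-1$ is correct and the answer features $H_{n-1}^{(r-t)}$ rather than $H_{n}^{(r-t)}$.
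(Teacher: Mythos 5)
Your proposal is correct and follows essentially the same route as the paper's own proof: expand $H_{n-k}^{(r)}$ as a sum of reciprocal powers, swap the order of summation, collapse the inner sum via the hockey-stick identity to $\binom{n-j+1}{p+1}$, expand this through the Stirling numbers of the first kind and the binomial theorem, and collect the coefficient of $j^{t}$ to recognize $a_{6}(p,n,t)$ and the sums $H_{n-1}^{(r-t)}$. The only difference is cosmetic: you make explicit the boundary checks (the vanishing $k=n$ term and the validity of the hockey-stick identity for $p\ge 1$) that the paper leaves implicit.
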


\begin{proof}
\begin{align*}
\sum_{k=1}^{n} \binom{k}{p} H_{n-k}^{(r)}
&=\sum_{k=1}^{n} \binom{k}{p} \sum_{j=1}^{n-k} \frac{1}{j^{r}}\\
&=\sum_{j=1}^{n-1} \frac{1}{j^{r}} \sum_{k=1}^{n-j}\binom{k}{p}\\
&=\sum_{j=1}^{n-1} \frac{1}{j^{r}} \binom{n-j+1}{p+1}\\
&=\sum_{j=1}^{n-1} \frac{1}{j^{r}} \frac{1}{(p+1)!}\sum_{i=0}^{p+1} s(p+1,i) (n-j+1)^{i}\\
&=\sum_{j=1}^{n-1} \frac{1}{j^{r}} \frac{1}{(p+1)!}\sum_{i=0}^{p+1} s(p+1,i) \sum_{t=0}^{i}\binom{i}{t} j^{t} (-1)^{t} (n+1)^{i-t}\\
&=\sum_{j=1}^{n-1} \frac{1}{j^{r}} \frac{1}{(p+1)!}\sum_{t=0}^{p+1} j^{t} \sum_{i=t}^{p+1} s(p+1,i) \binom{i}{t} (-1)^{t} (n+1)^{i-t}\\
&=\sum_{t=0}^{p+1} \sum_{j=1}^{n-1} \frac{1}{j^{r-t}} \frac{1}{(p+1)!} \sum_{i=t}^{p+1} s(p+1,i) \binom{i}{t} (-1)^{t} (n+1)^{i-t}\,.
\end{align*}
\end{proof}

\begin{theorem}\label{hypersturc7}
For positive integers $n, p$ and $r$, we have
\begin{align*}
\sum_{\ell=0}^n \ell^{p}H_{n-\ell}^{(r)}=\sum_{t=0}^{p+1} H_{n-1}^{(r-t)} \sum_{\ell=1}^{p} S(p,\ell){\ell!} a_{6}(\ell,n,t)\,.
\end{align*}
\end{theorem}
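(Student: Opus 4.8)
The plan is to read this identity as a direct application of the Spie\ss{} reduction lemma (the first lemma stated in Section~2.2), taken with the weights $c_\ell:=H_{n-\ell}^{(r)}$, once the associated binomial-weighted sum has been evaluated through Lemma~\ref{binoma6}.

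First I would verify the hypothesis of that reduction lemma by computing
\[
F(n,p):=\sum_{\ell=0}^n\binom{\ell}{p}c_\ell=\sum_{\ell=0}^n\binom{\ell}{p}H_{n-\ell}^{(r)}.
\]
Since $p\ge 1$ forces $\binom{0}{p}=0$, the term $\ell=0$ drops out and the remaining sum is precisely the left-hand side of Lemma~\ref{binoma6} with the index $k$ renamed to $\ell$. Hence $F(n,p)=\sum_{t=0}^{p+1}a_6(p,n,t)H_{n-1}^{(r-t)}$; replacing the parameter $p$ by a dummy $m$ gives $F(n,m)=\sum_{t=0}^{m+1}a_6(m,n,t)H_{n-1}^{(r-t)}$.

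Next I would substitute this into the Spie\ss{} lemma. With $c_\ell=H_{n-\ell}^{(r)}$ it yields
\begin{align*}
\sum_{\ell=0}^n\ell^p H_{n-\ell}^{(r)}
&=\sum_{m=0}^p S(p,m)\,m!\,F(n,m)\\
&=\sum_{m=0}^p S(p,m)\,m!\sum_{t=0}^{m+1}a_6(m,n,t)H_{n-1}^{(r-t)},
\end{align*}
where the term $m=0$ may be discarded because $S(p,0)=0$ for $p\ge 1$. It then remains only to interchange the two summations so as to collect the coefficient of each $H_{n-1}^{(r-t)}$.

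The one point needing a little care is the bookkeeping of the bounds after the swap. For fixed $t$ the natural range of $m$ is $\max\{1,t-1\}\le m\le p$, while $t$ runs from $0$ to $p+1$; but since the defining sum $\sum_{i=t}^{m+1}$ of $a_6(m,n,t)$ is empty (so that $a_6(m,n,t)=0$) whenever $t>m+1$, the lower restriction on $m$ may be dropped and the inner index allowed to run over all of $1\le m\le p$ without changing the value. Renaming $m$ to $\ell$ this produces exactly
\[
\sum_{t=0}^{p+1}H_{n-1}^{(r-t)}\sum_{\ell=1}^p S(p,\ell)\,\ell!\,a_6(\ell,n,t),
\]
which is the asserted formula. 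Everything past this index manipulation is a formal substitution, so I expect the main (and only mild) obstacle to be justifying the extension of the inner summation range, which rests on the vanishing of $a_6(\ell,n,t)$ for $t>\ell+1$.
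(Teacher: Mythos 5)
Your proposal is correct and matches the paper's own proof essentially step for step: both apply the Spie\ss{} reduction lemma with $c_\ell=H_{n-\ell}^{(r)}$, evaluate $\sum_{k}\binom{k}{\ell}H_{n-k}^{(r)}$ via Lemma~\ref{binoma6}, and interchange the summations. In fact you are slightly more careful than the paper, which silently writes the inner upper limit as $p+1$ rather than $\ell+1$; your observation that $a_6(\ell,n,t)=0$ for $t>\ell+1$ is exactly the justification that step needs.
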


\begin{proof}
\begin{align*}
\sum_{\ell=0}^n \ell^{p}H_{n-\ell}^{(r)}
&=\sum_{\ell=0}^{p} S(p,\ell){\ell!} \sum_{k=0}^{n} \binom{k}{\ell} H_{n-k}^{(r)}\\
&=\sum_{\ell=1}^{p} S(p,\ell){\ell!} \sum_{k=1}^{n} \binom{k}{\ell} H_{n-k}^{(r)}\\
&=\sum_{\ell=1}^{p} S(p,\ell){\ell!} \sum_{t=0}^{p+1} a_{6}(\ell,n,t)H_{n-1}^{(r-t)}\\
&=\sum_{t=0}^{p+1} H_{n-1}^{(r-t)} \sum_{\ell=1}^{p} S(p,\ell){\ell!} a_{6}(\ell,n,t)\,.
\end{align*}
\end{proof}

Before going further, we introduce some more notations.
\begin{align*}
&a_{7}(p,n,t):=\sum_{\ell=1}^{p} S(p,\ell)\cdot {\ell!} \cdot a_{6}(\ell,n,t)\,,\\
&a_{8}(r,m,k,n):=\sum_{j=k}^{r-1-m}\hat{a}(r,m,j) \binom{j}{k}  (-1)^{k} n^{j-k}\,.
\end{align*}

\begin{theorem}\label{hypersturc8}
For positive integers $n, p, q$ and $r$, we have
$$
\sum_{\ell=0}^n \ell^{p}H_{n-\ell}^{(q,r)}=\sum_{y=0}^{p+r} c(p,r,n,y) H_{n-1}^{(q-y)} \,,
$$
where
$$
c(p,r,n,y)=\sum_{\substack{m+t=y\\ 0 \le m\le r-1\\ 0 \le t \le p+r-m}}\sum_{k=max\{0,t-p-1\}}^{r-1-m} a_{7}(p+k,n,t) a_{8}(r,m,k,n)\,.\\
$$
\end{theorem}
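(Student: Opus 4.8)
The plan is to reduce the generalized hyperharmonic number $H_{n-\ell}^{(q,r)}$ to ordinary generalized harmonic numbers by means of Lemma \ref{lemma3}, and then to feed the result into Theorem \ref{hypersturc7}, which already evaluates sums of the shape $\sum_{\ell=0}^n \ell^P H_{n-\ell}^{(R)}$. No genuinely new device is required; the entire content is a careful reorganization of four nested finite sums.

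First I would apply Lemma \ref{lemma3} with $n$ replaced by $n-\ell$, obtaining $H_{n-\ell}^{(q,r)}=\sum_{m=0}^{r-1}\sum_{j=0}^{r-1-m}\hat a(r,m,j)(n-\ell)^j H_{n-\ell}^{(q-m)}$. Multiplying by $\ell^p$, summing over $\ell$, and interchanging the finite sums recasts the left side as $\sum_{m=0}^{r-1}\sum_{j=0}^{r-1-m}\hat a(r,m,j)\sum_{\ell=0}^n \ell^p (n-\ell)^j H_{n-\ell}^{(q-m)}$. I would then expand the binomial $(n-\ell)^j=\sum_{k=0}^j \binom{j}{k}(-1)^k \ell^k n^{j-k}$, which turns the inner $\ell$-sum into
$$
\sum_{k=0}^j\binom{j}{k}(-1)^k n^{j-k}\sum_{\ell=0}^n \ell^{p+k}H_{n-\ell}^{(q-m)}.
$$
Now the innermost sum is exactly of the type handled by Theorem \ref{hypersturc7}, applied with the substitutions $p\mapsto p+k$ and $r\mapsto q-m$; using the abbreviation $a_7$ introduced just before the present theorem, this equals $\sum_{t=0}^{p+k+1}H_{n-1}^{(q-m-t)}a_7(p+k,n,t)$.

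Next I would interchange the $j$- and $k$-summations. For fixed $m$ and $k$, the coefficient $\hat a(r,m,j)\binom{j}{k}(-1)^k n^{j-k}$ summed over $j$ from $k$ to $r-1-m$ is precisely $a_8(r,m,k,n)$ by its definition, so the expression collapses to
$$
\sum_{\ell=0}^n \ell^p H_{n-\ell}^{(q,r)}=\sum_{m=0}^{r-1}\sum_{k=0}^{r-1-m}a_8(r,m,k,n)\sum_{t=0}^{p+k+1}a_7(p+k,n,t)H_{n-1}^{(q-m-t)}.
$$
Finally I would collect terms according to the superscript $q-y$, setting $y=m+t$. The constraint $t\le p+k+1$ is equivalent to $k\ge\max\{0,t-p-1\}$, which furnishes the lower summation limit on $k$, while $k\le r-1-m$ gives the upper limit; together with $0\le m\le r-1$ these reproduce the stated summation region for $c(p,r,n,y)$. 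Checking the extreme cases $m=r-1$ and $t=p+r-m$ shows that $y=m+t$ ranges over $0\le y\le p+r$, matching the outer sum in the statement.

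The main obstacle is purely the index bookkeeping when commuting the four nested sums: one must track how the bound $t\le p+k+1$ from Theorem \ref{hypersturc7} transforms into the lower limit $k\ge\max\{0,t-p-1\}$, and verify that after grouping by $y=m+t$ the admissible $(m,t)$ pairs are exactly those with $0\le t\le p+r-m$ and $0\le m\le r-1$. Once these ranges are confirmed consistent, the identity follows immediately by reading off the coefficient of each $H_{n-1}^{(q-y)}$.
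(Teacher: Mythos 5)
Your proposal is correct and follows essentially the same route as the paper: apply Lemma \ref{lemma3} with $n$ replaced by $n-\ell$, expand $(n-\ell)^j$ binomially, evaluate the inner sums $\sum_{\ell=0}^n \ell^{p+k}H_{n-\ell}^{(q-m)}$ via Theorem \ref{hypersturc7} (i.e.\ via $a_7$), identify the $j$-sum as $a_8$, and regroup by $y=m+t$ with the bound $t\le p+k+1$ becoming $k\ge\max\{0,t-p-1\}$. The only difference is the inconsequential order in which you collapse the $j$-sum into $a_8$ versus invoke Theorem \ref{hypersturc7}, so there is nothing substantive to add.
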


\begin{proof}
With the help of Lemma \ref{lemma3} and Theorem \ref{hypersturc7}, we obtain
\begin{align*}
&\quad \sum_{\ell=0}^n \ell^{p}H_{n-\ell}^{(q,r)}\\
&=\sum_{\ell=0}^n \ell^{p} \sum_{m=0}^{r-1} \sum_{j=0}^{r-1-m} \hat{a}(r,m,j) (n-\ell)^{j} H_{n-\ell}^{(q-m)}\\
&=\sum_{\ell=0}^n \ell^{p} \sum_{m=0}^{r-1} \sum_{j=0}^{r-1-m} \hat{a}(r,m,j) \sum_{k=0}^{j}\binom{j}{k} \ell^{k} (-1)^{k} n^{j-k} H_{n-\ell}^{(q-m)}\\
&=\sum_{\ell=0}^n \ell^{p} \sum_{m=0}^{r-1} H_{n-\ell}^{(q-m)} \sum_{k=0}^{r-1-m} \ell^{k} \sum_{j=k}^{r-1-m}\hat{a}(r,m,j) \binom{j}{k}  (-1)^{k} n^{j-k}\\
&=\sum_{m=0}^{r-1} \sum_{k=0}^{r-1-m} \sum_{\ell=0}^n \ell^{p+k} H_{n-\ell}^{(q-m)} a_{8}(r,m,k,n)\\
&=\sum_{m=0}^{r-1} \sum_{k=0}^{r-1-m} \sum_{t=0}^{p+k+1} H_{n-1}^{(q-m-t)} a_{7}(p+k,n,t) a_{8}(r,m,k,n)\\
&=\sum_{m=0}^{r-1} \sum_{t=0}^{p+r-m} H_{n-1}^{(q-m-t)}\sum_{k=max\{0,t-p-1\}}^{r-1-m} a_{7}(p+k,n,t) a_{8}(r,m,k,n)\\
&=\sum_{y=0}^{p+r} H_{n-1}^{(q-y)}\sum_{\substack{m+t=y\\ 0 \le m\le r-1\\ 0 \le t \le p+r-m}}\sum_{k=max\{0,t-p-1\}}^{r-1-m} a_{7}(p+k,n,t) a_{8}(r,m,k,n)\,.
\end{align*}
\end{proof}

\begin{Remark}
Summation formulas of type $\sum_{\ell=0}^{n} {(\ell)}^{(p)} H_{\ell}^{(q,r)}$,  $\sum_{\ell=0}^{n} {(\ell)}^{(p)} H_{n-\ell}^{(r)}$ and $\sum_{\ell=0}^{n} {(\ell)}^{(p)} H_{n-\ell}^{(q,r)}$ can be obtained in the same way with Theorem \ref{hypersturc3}.
\end{Remark}

\end{document}